\newtheorem{theorem}{Theorem}
\newtheorem{lemma}[theorem]{Lemma}
\newtheorem{corollary}[theorem]{Corollary}
\theoremstyle{definition}
\newtheorem{remark}[theorem]{Remark}
\newtheorem{definition}[theorem]{Definition}
\newtheorem{example}[theorem]{Example}
\begin{document}

\title{Multiplication Semimodules}

\author{Rafieh Razavi Nazari}

\address{Rafieh Razavi Nazari, Faculty of Mathematics, K. N. Toosi University of Technology, Tehran, Iran}
\email{rrazavi@mail.kntu.ac.ir}

\author{Shaban Ghalandarzadeh}

\address{Shaban Ghalandarzadeh, Faculty of Mathematics, K. N. Toosi University of Technology, Tehran, Iran}
\email{ghalandarzadeh@kntu.ac.ir}

\subjclass[2010]{16Y60}

\keywords{Semiring, Multiplication Semimodule.}

\begin{abstract}
Let $S$ be a semiring. An $S$-semimodule $M$ is called a multiplication semimodule if for each subsemimodule $N$ of $M$
there exists an ideal $I$ of $S$ such that $N=IM$. In this paper we investigate some properties of multiplication semimodules and generalize some results on multiplication modules to semimodules. We show that every multiplicatively cancellative multiplication semimodule is finitely generated and projective. Moreover, we characterize finitely generated cancellative multiplication $S$-semimodules when $S$ is a yoked semiring such that every maximal ideal of $S$ is subtractive.
\end{abstract}
\maketitle 

\section{Introduction}
\label{intro}
A semiring is a nonempty set $S$ together with two binary
operations addition $(+)$ and multiplication $(\cdot)$ such that $(S,+)$ is a commutative monoid with identity element $0$; $(S,.)$ is a monoid with identity element $1\neq 0$;  $0a=0=a0$ for all $a\in S$; $a(b+c)=ab+ac$ and $(b+c)a=ba+ca$ for every $a,b,c\in S$. We say that $S$ is a commutative semiring if the monoid $(S,.)$ is commutative. In this paper we assume that all semirings are commutative. A nonempty subset $I$ of a semiring $S$ is called an ideal of $S$ if $a+b\in I$ and $sa\in I$ for all $a,b\in I$ and $s\in S$. A semiring $S$ is called yoked if for all $a, b\in S$, there exists an element $t$ of $S$ such that $a+t=b$ or $b+t=a$. An ideal $I$ of a semiring $S$ is subtractive if $a+b\in I$ and $b\in I$ imply that
$a\in I$ for all $a,b\in S$. A semiring $S$ is local if it has a unique maximal ideal. A semiring is entire if $ab=0$ implies that $a=0$ or $b=0$. An element $s$
of a semiring $S$ is a unit if there exists an element $s^{\prime}$ of $S$ such that $ss^{\prime}=1$. A semiring $S$ is called a semidomain if for any nonzero element $a$ of $S$, $ab=ac$ implies that $b=c$. An element $a$ of a semiring $S$ is called multiplicatively idempotent if $a^2=a$. The semiring $S$ is multiplicatively idempotent if every element of $S$ is multiplicatively idempotent.

Let $(M,+)$ be an additive abelian monoid
with additive identity $0_{M}$. Then $M$ is called an $S$-semimodule if there exists a scalar multiplication $S\times M\rightarrow M$
denoted by $(s,m)\mapsto sm$, such that $(ss^{\prime})m=s(s^{\prime}m)$; $s(m+m^{\prime})= sm+sm^{\prime}$; $(s+s^{\prime})m=sm+s^{\prime}m$; $1m=m$ and $s0_{M}=0_{M}=0m$ for all $s,s^{\prime}\in S$ and all $m,m^{\prime}\in M$.
A subsemimodule $N$ of a semimodule $M$ is a nonempty subset of $M$ such that $m+n\in N$ and $sn\in N$ for all $m,n\in N$ and $s\in S$. 

Let $R$ be a ring. An $R$-module $M$ is a multiplication module if for each submodule $N$ of $M$ there exists an ideal $I$ of $R$ such that $N=IM$\cite{B1}. Multiplication semimodules are defined similarly. These semimodules have been studied by several authors(e.g. \cite{E2}, \cite{E3}, \cite{T2}, \cite{Y}). In this paper, we study multiplication semimodules and extend some results of \cite{E4} and \cite{S2} to semimodules over semirings. In Section \ref{sec:1}, we show that if $M$ is a multiplication $S$-semimodule and $\mathfrak{p}$ is a maximal ideal of $S$ such that $M\neq \mathfrak{p}M$, then $M_{\mathfrak{p}}$ is
cyclic. Moreover we prove that if $M$ is a finitely generated $S$-semimodule and for every maximal ideal $\mathfrak{p}$ of $S$, $M_{\mathfrak{p}}$ is cyclic, then $M$ is a multiplication $S$-semimodule. In Section \ref{sec:2}, we study multiplicatively cancellative(abbreviated as $MC$) multiplication semimodules. We show that $MC$ multiplication semimodules are finitely generated and projective.
In Section \ref{sec:3}, we characterize finitely generated cancellative multiplication semimodules over yoked semirings with subtractive maximal ideals.

\section{Multiplication semimodule}
\label{sec:1}
In this section we give some results of multiplication semimodules which are related to the corresponding results in multiplication modules.

Let $N$ and $L$ be subsemimodules of $M$ and $(N:L)=\{s\in S\mid sL\subseteq N \}$. Then it is clear that $(N:L)$ is an ideal of $S$.
\begin{definition}\cite{E3}
	Let $S$ be a semiring and $M$ an $S$-semimodule.
	Then $M$ is called a multiplication semimodule if for each subsemimodule $N$ of $M$
	there exists an ideal $I$ of $S$ such that $N=IM$. In this case it is easy to prove that $N=(N:M)M$. For
	example, every cyclic $S$-semimodule is a multiplication $S$-semimodule \cite[Example 2]{Y}.
\end{definition}
\begin{example}
	Let $S$ be a multiplicatively idempotent semiring. Then every ideal of $S$ is a multiplication $S$-semimodule. Let $J$ be an ideal of $S$ and $I\subseteq J$. If $x\in I$, then $x=x^2\in IJ$. Therefore $I=IJ$ and hence $J$ is a multiplication $S$-semimodule.
\end{example}

It is well-known that every homomorphic image of a multiplication module is a multiplication module (cf. \cite{E4} and \cite[Note 1.4]{T1}). A similar result holds for multiplication semimodules. 

\begin{theorem}
	Let $S$ be a semiring, $M$ and $N$ $S$-semimodules and $f:M\rightarrow N$ a surjective $S$-homomorphism. If $M$ is a multiplication $S$-semimodule, then $N$ is a multiplication \linebreak$S$-semimodule.
\end{theorem}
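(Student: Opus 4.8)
The plan is to take an arbitrary subsemimodule $K$ of $N$ and produce an ideal $I$ of $S$ with $K = IN$, using the multiplication hypothesis on $M$ together with surjectivity of $f$. First I would pull $K$ back along $f$: set $L = f^{-1}(K) = \{m \in M \mid f(m) \in K\}$. One checks routinely that $L$ is a subsemimodule of $M$ (it is closed under addition because $f$ is additive, and closed under scalar multiplication because $f$ is an $S$-homomorphism and $K$ is a subsemimodule). Since $M$ is a multiplication semimodule, there is an ideal $I$ of $S$ with $L = IM$.

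The key step is then to show that this same $I$ works for $K$, i.e. $K = IN$. Applying $f$ and using that $f$ is a surjective $S$-homomorphism, I would argue $f(IM) = I f(M) = IN$, since $f$ commutes with the $S$-action and $f(M) = N$ by surjectivity. On the other hand $f(L) = f(f^{-1}(K))$, and because $f$ is surjective this equals $K$. Combining, $K = f(L) = f(IM) = IN$, which is exactly the ideal-times-semimodule representation required.

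The main obstacle I anticipate is the equality $f(f^{-1}(K)) = K$, which in the module setting is immediate from surjectivity but deserves a moment of care in the semimodule setting: the inclusion $f(f^{-1}(K)) \subseteq K$ always holds, while the reverse needs surjectivity, namely that every $k \in K$ has some preimage $m \in M$, whence $m \in f^{-1}(K) = L$ and $k = f(m) \in f(L)$. A secondary point worth stating cleanly is the identity $f(IM) = IN$; the inclusion $f(IM) \subseteq IN$ follows element-wise from $f(sm) = s f(m)$ and additivity, and the reverse inclusion again uses surjectivity to realize each generator $s n \in IN$ as $s f(m) = f(sm) \in f(IM)$. Since all of these manipulations only use the defining properties of an $S$-homomorphism and the fact that $M$ is a multiplication semimodule, no subtractivity or cancellation hypotheses on $S$ are needed, and the argument parallels the classical module proof verbatim.
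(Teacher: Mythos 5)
Your proposal is correct and follows essentially the same route as the paper: pull the subsemimodule back along $f$, apply the multiplication hypothesis to get an ideal $I$ with $f^{-1}(K)=IM$, and push forward via $f(IM)=If(M)=IN$ using surjectivity. The extra care you take with $f(f^{-1}(K))=K$ and $f(IM)=IN$ is a fair elaboration of steps the paper leaves implicit.
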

\begin{proof}
	Let $N^{\prime}$ be a subsemimodule of $N$ and $M^{\prime}=\{m\in M\mid f(m)\in N^{\prime}\}$. Then $f(M^{\prime})=N^{\prime}$. Since $M$ is a multiplication $S$-semimodule, there exists an ideal $I$ of $S$ such that $M^{\prime}=IM$. Then $N^{\prime}=f(M^{\prime})=f(IM)=If(M)=IN$. Therefore $N$ is a multiplication $S$-semimodule.
\end{proof}

Fractional and invertible ideals of semirings have been studied in \cite{G1}. We recall here some definitions and properties. An element $s$ of a semiring $S$ is multiplicatively cancellable
(abbreviated as $MC$), if $sb=sc$ implies $b=c$ for all $b,c\in S$. We denote the set of all $MC$
elements of $S$ by $MC(S)$. The total quotient semiring of $S$, denoted by $Q(S)$, is defined as the localization of $S$ at $MC(S)$. Then $Q(S)$ is an $S$-semimodule and $S$ can be regarded as a subsemimodule of $Q(S)$. For the concept of the localization in semiring theory, we refer to \cite{K} and \cite{L1}.
A subset $I$ of $Q(S)$ is called a fractional ideal of $S$ if $I$ is a subsemimodule of $Q(S)$ and there exists an $MC$ element $d\in S$ such that $dI \subseteq S$. Note that every ideal of $S$ is a fractional ideal. 
The product of two fractional ideals is defined by $IJ=\{a_1b_1+\ldots+a_nb_n\mid a_i\in I, b_i\in J \}$.
A fractional ideal $I$ of a semiring $S$ is called an invertible ideal if there exists a fractional ideal $J$ of $S$ such that $IJ=S$.

Now we restate the following property of invertible ideals from \cite{G1}(see also \cite[Proposition 6.3]{L3}).

\begin{theorem}
	Let $S$ be a semiring. An ideal $I$ of $S$ is invertible if and only if it is a multiplication $S$-semimodule which contains an
	$MC$ element of $S$.
\end{theorem}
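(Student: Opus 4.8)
The plan is to prove both implications directly from the definitions, working throughout inside the commutative total quotient semiring $Q(S)$, so that products of fractional ideals are associative and commutative and cancellation by $MC$ elements is available.

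For the forward implication, suppose $I$ is invertible, so there is a fractional ideal $J$ with $IJ=S$. I would first locate an $MC$ element inside $I$. Since $J$ is fractional there is an $MC$ element $d\in S$ with $dJ\subseteq S$, and since $1\in IJ$ I can write $1=\sum_{i}a_ib_i$ with $a_i\in I$ and $b_i\in J$. Multiplying through by $d$ gives $d=\sum_i a_i(db_i)$; because each $db_i\in dJ\subseteq S$ and $I$ is an ideal, every summand $a_i(db_i)$ lies in $I$, so $d\in I$ is the desired $MC$ element. To see that $I$ is a multiplication semimodule, take an arbitrary subsemimodule $N\subseteq I$ and set $K=NJ$. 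Then $K\subseteq IJ=S$, so $K$ is an ideal of $S$, and associativity together with $JI=S$ gives $KI=N(JI)=NS=N$. Hence every subsemimodule of $I$ has the required form $KI$, and $I$ is a multiplication semimodule.

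For the converse, suppose $I$ is a multiplication semimodule containing an $MC$ element $d$. The cyclic subsemimodule $Sd$ is a subsemimodule of $I$, so the multiplication hypothesis yields an ideal $K$ of $S$ with $Sd=KI$. Since $d$ is $MC$ its inverse $d^{-1}$ exists in $Q(S)$, and I would define $J=d^{-1}K=\{d^{-1}k\mid k\in K\}$. A short check shows $J$ is a subsemimodule of $Q(S)$ and that $dJ=K\subseteq S$, so $J$ is a fractional ideal. Finally, using commutativity and the relation $KI=Sd$, I compute $IJ=d^{-1}(IK)=d^{-1}(KI)=d^{-1}(Sd)=S$, which shows $I$ is invertible.

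The routine parts are the verifications that $K=NJ$ and $J=d^{-1}K$ really are (fractional) ideals and the formal manipulation of products of fractional ideals. The one genuinely delicate step is the converse: one must apply the multiplication property not to $I$ itself but to the cyclic subsemimodule $Sd$ generated by the $MC$ element, and then recover the inverse by clearing the denominator $d$. The $MC$ hypothesis is used in an essential way here, both to guarantee that $d^{-1}$ exists in $Q(S)$ and to make the cancellation $d^{-1}(Sd)=S$ go through.
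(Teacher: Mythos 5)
Your proof is correct, but there is nothing in the paper to compare it against: the paper does not prove this theorem, it only restates it from \cite{G1} (pointing also to \cite[Proposition 6.3]{L3}), so your argument supplies a proof that the paper leaves to its references. What you give is essentially the classical Larsen--McCarthy argument for invertible ideals of rings transplanted to semirings, and it goes through. In the forward direction, writing $1=\sum_i a_ib_i$ with $a_i\in I$, $b_i\in J$ and multiplying by the $MC$ element $d$ satisfying $dJ\subseteq S$ indeed gives $d=\sum_i a_i(db_i)\in I$, and for a subsemimodule $N\subseteq I$ the set $K=NJ$ is an ideal of $S$ with $KI=N(JI)=NS=N$. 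In the converse, applying the multiplication hypothesis to the cyclic subsemimodule $Sd$ to get $Sd=KI$ and setting $J=d^{-1}K$ inside $Q(S)$ yields a fractional ideal with $IJ=d^{-1}(KI)=d^{-1}(Sd)=S$. The only steps you gloss over that deserve a line each in a final write-up are (i) associativity and commutativity of products of $S$-subsemimodules of $Q(S)$, used in $(NJ)I=N(JI)$ and $I(d^{-1}K)=d^{-1}(KI)$, which is immediate because such products consist of finite sums of elementwise products and $S$ is commutative; and (ii) the fact that $S$ embeds in $Q(S)$ (stated in the paper), so the equality $Sd=KI$ established inside $S$ can legitimately be manipulated inside $Q(S)$. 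Your closing remark identifies the right crux: the $MC$ hypothesis is exactly what makes $d$ a unit in $Q(S)$ and licenses the cancellation $d^{-1}(Sd)=S$.
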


Let $M$ be an $S$-semimodule and $\mathfrak{p}$ a maximal ideal of $S$. Then similar to \cite{E4}, we define 
$T_{\mathfrak{p}}(M)=\{m\in M\mid\ there\ exist\ s\in S\ and\  q\in \mathfrak{p}\ such\ that\ s+q=1\ and\ sm=0 \}$. Clearly $T_{\mathfrak{p}}(M)$ is a subsemimodule of $M$. We say that $M$ is $\mathfrak{p}$-cyclic if there exist $m\in M$, $t\in S$ and  $q\in \mathfrak{p}$ such that $t+q=1$ and  $tM\subseteq Sm$.

The following two theorems can be thought of as a generalization of \cite[Theorem 1.2]{E4}(see also \cite[Proposition 3]{E2}).
\begin{theorem}
	\label{17}
	Let $M$ be an $S$-semimodule. If for every maximal ideal $\mathfrak{p}$ of $S$ either $T_\mathfrak{p}(M)=M$ or $M$ is $\mathfrak{p}$-cyclic, then $M$ is a multiplication semimodule.
\end{theorem}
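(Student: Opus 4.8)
The plan is to prove that the canonical ideal $I=(N:M)$ works for every subsemimodule $N$, that is, $N=(N:M)M$. Write $K=(N:M)M$. The inclusion $K\subseteq N$ is immediate, since every generator $sx$ with $s\in(N:M)$ and $x\in M$ lies in $N$, and $N$ is closed under addition and contains $0_M$. So the entire content is the reverse inclusion $N\subseteq K$, which I would establish by contradiction. Suppose some $m\in N$ satisfies $m\notin K$, and consider $L=\{s\in S\mid sm\in K\}$. One checks directly that $L$ is an ideal of $S$, and $m\notin K$ forces $1\notin L$, so $L$ is proper and therefore contained in some maximal ideal $\mathfrak{p}$ of $S$ (using the standard fact that every proper ideal of a semiring lies under a maximal ideal). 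The hypothesis is then applied at this $\mathfrak{p}$.

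If $T_\mathfrak{p}(M)=M$, the argument is short: since $m\in T_\mathfrak{p}(M)$, there are $s\in S$ and $q\in\mathfrak{p}$ with $s+q=1$ and $sm=0_M$. Then $sm=0_M\in K$ gives $s\in L\subseteq\mathfrak{p}$, and since $q\in\mathfrak{p}$ we get $1=s+q\in\mathfrak{p}$, contradicting that $\mathfrak{p}$ is proper.

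The substantive case is when $M$ is $\mathfrak{p}$-cyclic, say $t+q=1$ with $q\in\mathfrak{p}$ and $tM\subseteq Sm'$ for some $m'\in M$. Here I would first write $tm=r_0m'$ for some $r_0\in S$; since $m\in N$ and $N$ is a subsemimodule, $r_0m'=tm\in N$, so $r_0$ lies in $(N:Sm')=\{s\in S\mid sm'\in N\}$. The key step is that $t\,(N:Sm')\subseteq(N:M)$: indeed, if $rm'\in N$, then $rtM\subseteq S(rm')\subseteq N$, so $rt\in(N:M)$. Applying this to $r_0$ gives $tr_0\in(N:M)$, whence $t^2m=(tr_0)m'\in(N:M)M=K$, and thus $t^2\in L\subseteq\mathfrak{p}$. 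To finish without subtraction I would square the relation $t+q=1$: since $(t+q)^2=t^2+q(2t+q)$ with $t^2\in\mathfrak{p}$ and $q(2t+q)\in\mathfrak{p}$, we again obtain $1\in\mathfrak{p}$, a contradiction. Hence no such $m$ exists and $N=K=(N:M)M$, so $M$ is a multiplication semimodule.

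The main obstacle is exactly the $\mathfrak{p}$-cyclic case. In the module proof of \cite[Theorem 1.2]{E4} one works with the complementary element $1-p$ and concludes from $(1-p)^2m\in K$, but in a semiring there is no $1-p$; so I must carry the element $q$ with $t+q=1$ explicitly and replace the subtraction step by squaring $t+q=1$ in order to absorb both $t^2$ and the remaining terms into $\mathfrak{p}$. A secondary point to verify carefully is the inclusion $t\,(N:Sm')\subseteq(N:M)$ together with the standard existence of a maximal ideal above the proper ideal $L$, which justifies the choice of $\mathfrak{p}$.
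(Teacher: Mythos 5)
Your proposal is correct and follows essentially the same route as the paper's proof: the same proper ideal $L=\{s\in S\mid sm\in (N:M)M\}$, the same two-case analysis at a maximal ideal containing it, and the same key computation showing $t^{2}m\in (N:M)M$ (the paper phrases this step via $tN=Km$ with $K=\{s\mid sm'\in tN\}$ and $tK\subseteq (N:M)$ rather than element-wise, but it is the same idea). Your explicit squaring of $t+q=1$ to turn $t^{2}\in\mathfrak{p}$ into the contradiction $1\in\mathfrak{p}$ is a detail the paper leaves implicit; otherwise the arguments coincide.
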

\begin{proof}
	Let $N$ be a subsemimodule of $M$ and $I=(N:M)$. Then $IM\subseteq N$. Let $x\in N$ and $J=\{s\in S\mid sx\in IM\}$. Clearly $J$ is an ideal of $S$. If $J\neq S$, then by \cite[Proposition 6.59]{G2} there exists a maximal ideal $\mathfrak{p}$ of $S$ such that $J\subseteq \mathfrak{p}$. If $M=T_\mathfrak{p}(M)$, then there exist $s\in S$ and $q\in \mathfrak{p}$ such that $s+q=1$ and $sx=0\in IM$. Hence $s\in J\subseteq \mathfrak{p}$ which is a contradiction. So the second case will happen. Therefore there exist $m\in M$, $t\in S$ and  $q\in \mathfrak{p}$ such that $t+q=1$ and $tM\subseteq Sm$. Thus $tN$ is a subsemimodule of $Sm$ and $tN=Km$ where $K=\{s\in S\mid sm\in tN \}$. Moreover, $tKM=KtM\subseteq Km\subseteq N$. Therefore $tK\subseteq I$. Thus $t^2x\in t^2N=tKm\subseteq IM$. Hence $t^2\in J\subseteq \mathfrak{p}$ which is a contradiction. Therefore $J=S$ and $x\in IM$.    
\end{proof} 
\begin{theorem}
	\label{1}
	Suppose that $M$ is an $S$-semimodule. If $M$ is a multiplication semimodule, then for every maximal ideal $\mathfrak{p}$ of $S$ either $M=\{m\in M\mid m=qm\ for\ some\ q\in \mathfrak{p}\}$ or $M$ is $\mathfrak{p}$-cyclic.
\end{theorem}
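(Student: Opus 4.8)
The plan is to split on whether $\mathfrak{p}M=M$; this is precisely the dichotomy the statement records, with $\mathfrak{p}M\neq M$ leading to the $\mathfrak{p}$-cyclic alternative and $\mathfrak{p}M=M$ leading to the equality with $\{m\mid m=qm\text{ for some }q\in\mathfrak{p}\}$. The only inputs I need are that a multiplication semimodule satisfies $N=(N:M)M$ for every subsemimodule $N$ (noted after the definition) and that, for a maximal ideal $\mathfrak{p}$, any ideal not contained in $\mathfrak{p}$ generates $S$ together with $\mathfrak{p}$. Because $S$ has no subtraction, I would deliberately avoid any determinant/Nakayama-type device and instead carry everything through products of ideals with subsemimodules.

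First I would treat the case $\mathfrak{p}M\neq M$. Choose $m\in M\setminus\mathfrak{p}M$ and set $I=(Sm:M)$, so $Sm=IM$. If $I\subseteq\mathfrak{p}$ then $m\in Sm=IM\subseteq\mathfrak{p}M$, contradicting the choice of $m$; hence $I\not\subseteq\mathfrak{p}$. Then $I+\mathfrak{p}$ is an ideal strictly larger than $\mathfrak{p}$, so by maximality $I+\mathfrak{p}=S$, and I may write $1=t+q$ with $t\in I$ and $q\in\mathfrak{p}$. Since $t\in I=(Sm:M)$ we get $tM\subseteq Sm$, so $M$ is $\mathfrak{p}$-cyclic, which is the second alternative.

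It remains to handle $\mathfrak{p}M=M$, where I claim every $m\in M$ satisfies $m=qm$ for some $q\in\mathfrak{p}$. Fixing $m$ and writing $Sm=(Sm:M)M$, I substitute $M=\mathfrak{p}M$ and use commutativity and associativity of the ideal–subsemimodule product to obtain
\[
Sm=(Sm:M)M=(Sm:M)(\mathfrak{p}M)=\mathfrak{p}\big((Sm:M)M\big)=\mathfrak{p}(Sm)=\mathfrak{p}m,
\]
where the final equality holds because $\mathfrak{p}(Sm)$ consists of the elements $(qs)m$ and $qs$ ranges over all of $\mathfrak{p}$. Since $m\in Sm$, this gives $m\in\mathfrak{p}m$, i.e. $m=qm$ for some $q\in\mathfrak{p}$, so $M$ equals the displayed set.

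The computation is routine once the two cases are arranged, so the main conceptual point — and the step I would be most careful with — is recognizing that the correct semiring form of the torsion condition is ``$m=qm$'' rather than ``$(1-q)m=0$'', and that the identity $Sm=\mathfrak{p}m$ follows purely from $Sm=(Sm:M)M$ together with $\mathfrak{p}M=M$. This is exactly what makes the module-theoretic argument survive over a semiring, bypassing the subtraction hidden in the classical proof. I would verify only the one genuinely definitional step, namely $\mathfrak{p}(Sm)=\mathfrak{p}m$ as subsemimodules, since that is where the precise definition of the product is used.
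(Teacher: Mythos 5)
Your proposal is correct and follows essentially the same route as the paper's own proof: the case $\mathfrak{p}M=M$ is handled by the identity $Sm=(Sm:M)M=\mathfrak{p}(Sm:M)M=\mathfrak{p}m$, and the case $\mathfrak{p}M\neq M$ by choosing $x\notin\mathfrak{p}M$, noting $(Sx:M)\nsubseteq\mathfrak{p}$, and writing $1=t+q$ to get $tM\subseteq Sx$. No substantive difference.
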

\begin{proof}
	Let $\mathfrak{p}$ be a maximal ideal of $S$ and $M=\mathfrak{p}M$. If $m\in M$, then there exists an ideal $I$ of $S$ such that $Sm=IM$. Hence $Sm=I\mathfrak{p}M=\mathfrak{p}IM=\mathfrak{p}m$. Therefore $m=qm$ for some $q\in \mathfrak{p}$. Now let $M\neq \mathfrak{p}M$. Thus there exists $x\in M$ such that $x\notin \mathfrak{p}M$. Then there exists ideal $I$ of $S$ such that $Sx=IM$. If $I\subseteq \mathfrak{p}$, then $x\in IM\subseteq \mathfrak{p}M$ which is a contradiction. Thus $I\nsubseteq \mathfrak{p}$ and since $\mathfrak{p}$ is a maximal ideal of $S$, $\mathfrak{p}+I=S$. Thus there exist $t\in I$ and $q\in \mathfrak{p}$ such that $q+t=1$. Moreover, $tM\subseteq IM=Sx$. Therefore $M$ is $\mathfrak{p}$-cyclic.  
\end{proof}

By using Theorem \ref{1}, we obtain the following corollary.
\begin{corollary}
	\label{13}
	Suppose that $(S,\mathfrak{m})$ is a local semiring. Let $M$ be a multiplication \linebreak$S$-semimodule such that 
	$M\neq \mathfrak{m}M$. Then $M$ is a cyclic semimodule. 
\end{corollary}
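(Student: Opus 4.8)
The plan is to feed the unique maximal ideal $\mathfrak{m}$ into Theorem \ref{1}, discard the first alternative using the hypothesis $M\neq\mathfrak{m}M$, and then strengthen the resulting $\mathfrak{m}$-cyclicity into honest cyclicity by exploiting locality.

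First I would apply Theorem \ref{1} with $\mathfrak{p}=\mathfrak{m}$. This yields two cases. In the first, $M=\{m\in M\mid m=qm \text{ for some } q\in\mathfrak{m}\}$; but each such $m$ satisfies $m=qm\in\mathfrak{m}M$, so this alternative would force $M\subseteq\mathfrak{m}M$, hence $M=\mathfrak{m}M$, contradicting the hypothesis. Therefore the second alternative must hold: $M$ is $\mathfrak{m}$-cyclic, so there exist $m\in M$, $t\in S$ and $q\in\mathfrak{m}$ with $t+q=1$ and $tM\subseteq Sm$.

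The key remaining step is to show that $t$ is a unit. Since $t+q=1$ with $q\in\mathfrak{m}$, we cannot have $t\in\mathfrak{m}$, for otherwise $1=t+q\in\mathfrak{m}$. Now consider the ideal $St$. If $St\neq S$, then by \cite[Proposition 6.59]{G2} it is contained in some maximal ideal of $S$, which by locality must be $\mathfrak{m}$; this would give $t\in St\subseteq\mathfrak{m}$, a contradiction. Hence $St=S$, so $st=1$ for some $s\in S$ and $t$ is a unit. I expect this to be the main point where the semiring hypotheses genuinely enter, namely locality together with the fact that every proper ideal sits inside a maximal ideal; in an ordinary local ring this is the familiar statement that the non-units form the maximal ideal.

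Finally, applying $s$ to the inclusion $tM\subseteq Sm$ gives $M=1\cdot M=(st)M=s(tM)\subseteq s(Sm)=S(sm)\subseteq M$, so that $M=S(sm)$ is generated by the single element $sm$ and is therefore cyclic. This concluding manipulation is purely formal and transfers verbatim from the module setting, so the entire weight of the argument rests on the preceding unit-extraction step.
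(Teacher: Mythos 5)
Your proof is correct and follows essentially the same route as the paper: invoke Theorem \ref{1} to get $\mathfrak{m}$-cyclicity (ruling out the first alternative via $M=\mathfrak{m}M$), then observe that $t$ with $t+q=1$, $q\in\mathfrak{m}$ must be a unit in a local semiring, whence $M=Sm$. The only difference is that the paper cites \cite[Theorem 3.5]{K} for the unit-extraction step, whereas you prove it directly from the fact that every proper ideal lies in a maximal ideal; both are fine.
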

\begin{proof}
	Since $M\neq \mathfrak{m}M$, $M$ is $\mathfrak{m}$-cyclic. Thus there exist $n\in M$, $t\in S$ and  $q\in \mathfrak{m}$ such that $t+q=1$ and $tM\subseteq Sn$. Since $S$ is a local semiring, by \cite[Theorem 3.5]{K}, $t$ is unit. Hence $M=Sn$.   
\end{proof}
\begin{remark}
	\label{16}
	Let $S$ be a semiring and $T$ a non-empty multiplicatively closed subset of $S$, and let $M$ be an $S$-semimodule.
	Define a relation $\sim$ on $M\times T$ as follows: 
	$(m,t)\sim(m^{\prime},t^{\prime}) \Longleftrightarrow \exists s\in T$ such that $stm^{\prime}=st^{\prime}m$. The
	relation $\sim$ on $M\times T$ is an equivalence relation.
	Denote the set $M\times T/\sim$ by $T^{-1}M$ and the equivalence class of each
	pair $(m,s)\in M\times T$ by $m/s$. We can define addition on $T^{-1}M$ by $m/t+m^{\prime}/t^{\prime}=(t^{\prime}m+tm^{\prime})/tt^{\prime}$. Then $(T^{-1}M,+)$ is an abelian monoid. Let $s/t\in T^{-1}S$ and $m/u\in T^{-1}M$. We can define the product of $s/t$ and $m/u$ by $(s/t)(m/u)=sm/tu$.
	Then $T^{-1}M$ is an $T^{-1}S$-semimodule\cite{D}.
	Let $\mathfrak{p}$ be a prime ideal in $S$ and $T=S\backslash \mathfrak{p}$. Then $T^{-1}M$ is denoted by $M_{\mathfrak{p}}$.
	
	We can obtain the following results as in \cite{N}.
	\begin{enumerate}
		\item Suppose that $I$ is an ideal of a semiring $S$ and $M$ is an $S$-semimodule. Then 
		$T^{-1}IT^{-1}M=T^{-1}(IM)$.\label{14}

		\item Let $N$, $N^{\prime}$ be subsemimodules of an $S$-semimodule $M$. If $N_{\mathfrak{m}}=N^{\prime}_{\mathfrak{m}}$ for every maximal ideal $\mathfrak{m}$, then $N=N^{\prime}$.\label{15}
		
	\end{enumerate}
\end{remark}

\begin{theorem}
	Let $S$ be a semiring and $M$ a multiplication $S$-semimodule. If $\mathfrak{p}$ is a maximal ideal of $S$ such that $M\neq\mathfrak{p}M$, then $M_{\mathfrak{p}}$ is
	cyclic.
\end{theorem}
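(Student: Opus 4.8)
The plan is to reduce to the $\mathfrak{p}$-cyclic case provided by Theorem \ref{1} and then transport the resulting inclusion through the localization $M\mapsto M_{\mathfrak{p}}$. Because $M$ is a multiplication semimodule, Theorem \ref{1} offers the dichotomy that either $M=\{m\in M\mid m=qm\text{ for some }q\in\mathfrak{p}\}$ or $M$ is $\mathfrak{p}$-cyclic. The first alternative forces $M=\mathfrak{p}M$, since $m=qm$ with $q\in\mathfrak{p}$ already exhibits $m\in\mathfrak{p}M$, whence $M\subseteq\mathfrak{p}M\subseteq M$. As we are assuming $M\neq\mathfrak{p}M$, the first alternative cannot occur, so $M$ must be $\mathfrak{p}$-cyclic.

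Unwinding the definition of $\mathfrak{p}$-cyclic, I obtain elements $m\in M$, $t\in S$ and $q\in\mathfrak{p}$ with $t+q=1$ and $tM\subseteq Sm$; the element $m$ is the candidate generator of $M_{\mathfrak{p}}$. The decisive point is that $t\notin\mathfrak{p}$, for otherwise $1=t+q$ would lie in $\mathfrak{p}$. Hence $t$ belongs to the multiplicatively closed set $T=S\setminus\mathfrak{p}$, and its image $t/1$ is a unit of $S_{\mathfrak{p}}$.

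Next I would localize the inclusion $tM\subseteq Sm$ at $\mathfrak{p}$. On the left, I claim $(tM)_{\mathfrak{p}}=M_{\mathfrak{p}}$: for any $m'/u\in M_{\mathfrak{p}}$ one has $m'/u=(tm')/(tu)$, because $s=1$ witnesses $(m',u)\sim(tm',tu)$; since $tu\in T$ and $tm'\in tM$, this shows $m'/u\in(tM)_{\mathfrak{p}}$, and the reverse inclusion is immediate. (Alternatively, write $tM=(St)M$ and use the identity $T^{-1}(IM)=T^{-1}I\,T^{-1}M$ of Remark \ref{16} together with $(St)_{\mathfrak{p}}=S_{\mathfrak{p}}$, the latter holding because $t/1$ is a unit.) On the right, $(Sm)_{\mathfrak{p}}=S_{\mathfrak{p}}(m/1)$ is the cyclic $S_{\mathfrak{p}}$-subsemimodule generated by $m/1$. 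Chaining these gives $M_{\mathfrak{p}}=(tM)_{\mathfrak{p}}\subseteq(Sm)_{\mathfrak{p}}=S_{\mathfrak{p}}(m/1)\subseteq M_{\mathfrak{p}}$, so $M_{\mathfrak{p}}=S_{\mathfrak{p}}(m/1)$ is cyclic.

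I expect the only genuine subtlety to be the localization step, specifically the verification that inverting $t$ makes $(tM)_{\mathfrak{p}}$ exhaust $M_{\mathfrak{p}}$; since semimodules lack subtraction, this has to be checked directly through the equivalence relation $\sim$, but the choice of witness $s=1$ makes it routine. A tempting shortcut would be to apply Corollary \ref{13} to the local semiring $S_{\mathfrak{p}}$ and the $S_{\mathfrak{p}}$-semimodule $M_{\mathfrak{p}}$, yet this would first demand that $M_{\mathfrak{p}}$ be a multiplication $S_{\mathfrak{p}}$-semimodule with $M_{\mathfrak{p}}\neq\mathfrak{p}S_{\mathfrak{p}}\,M_{\mathfrak{p}}$ --- facts not established in the excerpt --- so the direct argument above is preferable.
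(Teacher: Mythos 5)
Your proof is correct, but it takes a genuinely different route from the paper. The paper localizes first: it uses Remark \ref{16} to argue that $M_{\mathfrak{p}}$ is a multiplication $S_{\mathfrak{p}}$-semimodule with $M_{\mathfrak{p}}\neq\mathfrak{p}_{\mathfrak{p}}M_{\mathfrak{p}}$, notes that $S_{\mathfrak{p}}$ is local, and then invokes Corollary \ref{13} --- exactly the ``tempting shortcut'' you set aside. You instead work globally: Theorem \ref{1} gives the dichotomy, the hypothesis $M\neq\mathfrak{p}M$ kills the first branch (correctly, since $m=qm$ with $q\in\mathfrak{p}$ already puts $m$ in $\mathfrak{p}M$), and the resulting inclusion $tM\subseteq Sm$ with $t\notin\mathfrak{p}$ is pushed through the localization by hand, using the witness $s=1$ in the equivalence relation to see that inverting $t$ makes $(tM)_{\mathfrak{p}}$ all of $M_{\mathfrak{p}}$. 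Each step checks out. What your version buys is self-containment: the paper's route silently relies on the fact that every subsemimodule of $M_{\mathfrak{p}}$ arises as the localization of a subsemimodule of $M$ (needed for $M_{\mathfrak{p}}$ to inherit the multiplication property) and on deducing $M_{\mathfrak{p}}\neq\mathfrak{p}_{\mathfrak{p}}M_{\mathfrak{p}}$ from $M\neq\mathfrak{p}M$ via Remark \ref{16}(2), neither of which is spelled out; your argument needs only Theorem \ref{1} and elementary fraction manipulation. What the paper's route buys is brevity and reusability of the local statement, Corollary \ref{13}, which it has already proved. Your instinct that the localization step is the only genuine subtlety, and that it must be verified directly through $\sim$ because semimodules lack subtraction, is exactly right.
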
  
\begin{proof}
	By (\ref{14}), $M_{\mathfrak{p}}$ is a multiplication $S_{\mathfrak{p}}$-semimodule. 
	Since $M\neq \mathfrak{p}M$, $M_{\mathfrak{p}}\neq \mathfrak{p}_{\mathfrak{p}}M_{\mathfrak{p}}$ by (\ref{15}). Moreover, by \cite[Theorem 4.5]{K}, $S_{\mathfrak{p}}$ is a local semiring. Thus by Corollary \ref{13}, $M_{\mathfrak{p}}$ is cyclic.
\end{proof}
In the following result, we extend \cite[Theorem 4.21]{T1} to semirings.
\begin{theorem}
	Let $S$ be a semiring and $M$ a finitely generated $S$-semimodule. If for every maximal ideal $\mathfrak{p}$ of $S$, $S_\mathfrak{p}$-semimodule $M_\mathfrak{p}$ is cyclic, then $M$ is a multiplication \linebreak$S$-semimodule.
	\begin{proof}
		Let $x\in M$ and $t\in S\backslash \mathfrak{p}=T$ such that $M_{\mathfrak{p}}=S_\mathfrak{p}(x/t)$. Let $m\in M$. Then $m/1\in S_\mathfrak{p}(x/t)$. Thus there exist $r\in s$ and $t'\in T$ such that $m/1=(x/t)(r/t')$. Hence there exists $t''\in T$ such that $t''t'tm=t''rx\in (x)$. Moreover $a=t''t't\in T$ and hence $(a)+\mathfrak{p}=S$. Thus $r'a+q=1$ for some $r'\in S$ and $q\in \mathfrak{p}$. Then  $r'am=r't''rx\in (x)$. Now Let $M=\sum_{i=1}^{n} Sm_i$. Then for each $1\leq i\leq n$, there exist $q_i\in \mathfrak{p}$ and $t_i\in S$ such that $t_i+q_i=1$ and $t_im_i\in (x)$. Thus $1=\prod_{i=1}^{n}t_i+b$ for some $b\in \mathfrak{p}$ and $(\prod_{i=1}^{n}t_i)M\subseteq (x)$. Hence $M$ is a multiplication semimodule by Theorem \ref{17}.
	\end{proof}
\end{theorem}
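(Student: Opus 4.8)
The plan is to reduce everything to Theorem~\ref{17}: to conclude that $M$ is a multiplication semimodule it suffices to verify, for every maximal ideal $\mathfrak{p}$ of $S$, one of the two alternatives in that theorem, and I will establish the stronger fact that $M$ is $\mathfrak{p}$-cyclic at every $\mathfrak{p}$. So I fix a maximal ideal $\mathfrak{p}$ and set $T=S\setminus\mathfrak{p}$. By hypothesis $M_{\mathfrak{p}}$ is cyclic over $S_{\mathfrak{p}}$, say $M_{\mathfrak{p}}=S_{\mathfrak{p}}(x/t)$ for some $x\in M$ and $t\in T$; the element $x$ will serve as the candidate generator witnessing $\mathfrak{p}$-cyclicity.

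First I would handle one generator at a time. Writing the finite generating set as $M=\sum_{i=1}^{n}Sm_i$, for each $i$ the membership $m_i/1\in S_{\mathfrak{p}}(x/t)$ unwinds, through the description of $M_{\mathfrak{p}}$ in Remark~\ref{16}, into an honest equation in $M$ after clearing denominators: there is an element $a_i\in T$ with $a_i m_i\in Sx$. Here one must keep in mind that the equivalence relation on $M\times T$ carries an extra multiplier $s\in T$, so $a_i$ has to absorb both the denominators and that multiplier; this is the place where a little care is needed. Since $a_i\notin\mathfrak{p}$ and $\mathfrak{p}$ is maximal, we have $Sa_i+\mathfrak{p}=S$, so I may write $t_i+q_i=1$ with $t_i\in Sa_i$ and $q_i\in\mathfrak{p}$; multiplying $a_i m_i\in Sx$ by the appropriate scalar then gives $t_i m_i\in Sx$.

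The decisive step is to combine these $n$ relations into a single witness. Because we work over a semiring we cannot subtract, so instead I would expand the product $\prod_{i=1}^{n}(t_i+q_i)=1$. Every summand other than $\prod_{i=1}^{n}t_i$ contains at least one factor $q_j\in\mathfrak{p}$ and hence lies in $\mathfrak{p}$; collecting them yields $\prod_{i=1}^{n}t_i+b=1$ for some $b\in\mathfrak{p}$. Setting $t'=\prod_{i=1}^{n}t_i$, for each generator we get $t'm_i=\bigl(\prod_{j\neq i}t_j\bigr)(t_i m_i)\in Sx$, so $t'M\subseteq Sx$. Thus $x\in M$, $t'\in S$ and $b\in\mathfrak{p}$ satisfy $t'+b=1$ and $t'M\subseteq Sx$, which is exactly the statement that $M$ is $\mathfrak{p}$-cyclic.

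Finally, since $\mathfrak{p}$ was an arbitrary maximal ideal, the hypothesis of Theorem~\ref{17} is met at every maximal ideal, and I conclude that $M$ is a multiplication semimodule. The only genuinely semiring-specific obstacle is the passage from the individual relations $t_i+q_i=1$ to the combined relation $t'+b=1$: over a ring this is the triviality that a product of elements congruent to $1$ modulo $\mathfrak{p}$ is again congruent to $1$, but without additive inverses it has to be argued directly by expanding the product and observing that all cross terms fall into $\mathfrak{p}$.
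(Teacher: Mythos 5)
Your proof is correct and takes essentially the same route as the paper's: unwind the cyclicity of $M_{\mathfrak{p}}$ by clearing denominators to get $a_i m_i\in Sx$ with $a_i\notin\mathfrak{p}$, use maximality of $\mathfrak{p}$ to produce $t_i+q_i=1$ with $t_i m_i\in Sx$, multiply these relations together so that $\prod_i t_i + b = 1$ with $b\in\mathfrak{p}$ and $(\prod_i t_i)M\subseteq Sx$, and conclude via Theorem~\ref{17}. Your explicit justification of the step $\prod_i(t_i+q_i)=1\Rightarrow\prod_i t_i+b=1$ by expanding the product is a detail the paper leaves implicit, but it is the same argument.
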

\section{Multiplicatively cancellative multiplication semimodules}
\label{sec:2}
In this section, we study multiplicatively cancellative multiplication semimodules and give some properties of these semimodules.

An $S$-semimodule $M$ is called multiplicatively cancellative(abbreviated as $MC$) if for any $s,s^{\prime}\in S$ and $0\neq m\in M$, $sm=s^{\prime}m$ implies $s=s^{\prime}$\cite{E1}.
For example every ideal of a semidomain $S$ is an $MC$ $S$-semimodule. 

Note that if $M$ is an $MC$ $S$-semimodule, then $M$ is a faithful semimodule. Let $tM=\{0\}$ for some $t\in S$. If $0\neq m\in M$, then $tm=0m=0$. Thus $t=0$. Therefore $M$ is faithful.

Moreover, for an $R$-module $M$ over a domain $R$, $M$ is an $MC$ semimodule if and only if it is torsionfree. Also we know that if $R$ is a domain and $M$ a faithful multiplication $R$-module, then $M$ will be a torsionfree $R$-module and so $M$ is an $MC$ semimodule. 

An element $m$ of an $S$-semimodule $M$ is called cancellable if $m+m_1=m+m_2$ implies that $m_1=m_2$. The semimodule $M$ is cancellative if and only if every element of $M$ is cancellable \cite[P. 172]{G2}.

\begin{lemma}\label{12}
	Let $S$ be a yoked entire semiring and $M$ a cancellative faithful multiplication $S$-semimodule. Then $M$ is an $MC$ semimodule.
\end{lemma}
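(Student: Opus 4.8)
The plan is to begin with an arbitrary relation $sm=s'm$ in which $m\neq 0$, and to reduce it, using the yoked and cancellative hypotheses, to a single scalar annihilation $tm=0$; the entire difficulty then becomes showing that such a $t$ must vanish. First I would use that $S$ is yoked to produce $t\in S$ with $s+t=s'$ or $s'+t=s$. Treating the first case (the second being symmetric), substitution gives $sm=s'm=(s+t)m=sm+tm$, so that $sm+0_M=sm+tm$; since $M$ is cancellative, the summand $sm$ cancels and we obtain $tm=0$. It then suffices to prove $t=0$, because in that case $s'=s+t=s$.

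The key step for $t=0$ is to promote the relation $tm=0$, which a priori only concerns the single element $m$, to a statement about all of $M$, and this is exactly where the multiplication hypothesis enters. Since $M$ is a multiplication semimodule, the cyclic subsemimodule $Sm$ can be written as $Sm=IM$ for some ideal $I$ of $S$ (one may take $I=(Sm:M)$). Using commutativity of $S$, every generator $sm\in Sm$ satisfies $t(sm)=s(tm)=0$, so $t(Sm)=\{0\}$, that is, $t(IM)=(tI)M=\{0\}$. Consequently, for each $a\in I$ the scalar $ta$ annihilates all of $M$, and since $M$ is faithful this forces $ta=0$.

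Finally I would invoke entireness of $S$: from $ta=0$ we get, for each $a\in I$, that $t=0$ or $a=0$. If $t\neq 0$, this says $a=0$ for every $a\in I$, i.e. $I=\{0\}$, whence $Sm=IM=\{0\}$ and in particular $m\in Sm=\{0\}$, contradicting $m\neq 0$. Therefore $t=0$, so $s=s'$, and $M$ is $MC$.

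I expect the main obstacle to be the middle paragraph, namely passing from the local fact $tm=0$ to the global fact $(tI)M=\{0\}$. Without the multiplication hypothesis there is no reason a single annihilation should spread to the whole semimodule, and it is precisely the combination of the three hypotheses — multiplication to realize $Sm$ as $IM$, faithfulness to strip off $M$, and entireness to split the product $ta$ — that makes the deduction $t=0$ work; the yoked and cancellative conditions handle only the routine opening reduction.
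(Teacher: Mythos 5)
Your proof is correct and follows essentially the same route as the paper's: yoked plus cancellativity yield $tm=0$, the multiplication hypothesis gives $Sm=IM$, faithfulness gives $tI=\{0\}$, and entireness forces $t=0$. Your explicit observation that $I\neq\{0\}$ (since otherwise $Sm=\{0\}$ would contradict $m\neq 0$) is a detail the paper's proof silently skips when passing from $tI=\{0\}$ to $t=0$, so your version is if anything slightly more careful.
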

\begin{proof}
	Let $0\neq m\in M$ and $s,s^{\prime}\in S$ such that $sm=s^{\prime}m$. Since $S$ is a yoked semiring, there exists $t\in S$ such that $s+t=s^{\prime}$ or $s^{\prime}+t=s$. Suppose that $s+t=s^{\prime}$. Then $sm+tm=s^{\prime}m$. Since $M$ is a cancellative $S$-semimodule, $tm=0$. Moreover, there exists an ideal $I$ of $S$ such that $Sm=IM$ since $M$ is a multiplication $S$-semimodule. Then $tIM=tSm=\{0\}$ and hence $tI=\{0\}$ since $M$ is faithful. But $S$ is an entire semiring, so $t=0$. Therefore $s=s^{\prime}$.
	Now suppose that $s^{\prime}+t=s$. A similar argument shows that $s=s^{\prime}$. Therefore $M$ is an $MC$ semimodule.
\end{proof}  

We now give the following definition similar to \cite[P. 127]{L2}.

\begin{definition}
	Let $S$ be a semidomain. An $S$-semimodule $M$ is called a torsionfree semimodule if for any $0\neq a\in S$, multiplication by $a$ on $M$ is injective, i.e., if $ax=ay$ for some $x,y\in M$, then $x=y$.  
\end{definition}
\begin{theorem}
	Let $S$ be a yoked semidomain and $M$ a cancellative torsionfree $S$-semimodule. Then $M$ is an $MC$ semimodule.
\end{theorem}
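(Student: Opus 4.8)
The plan is to mimic the proof of Lemma~\ref{12}, using the yoked hypothesis to produce a ``difference'' element and the cancellativity of $M$ to annihilate it, but then replacing the ``faithful multiplication semimodule over an entire semiring'' machinery of that lemma with a single direct appeal to torsionfreeness. In other words, the semidomain and torsionfree assumptions will do in one step what the entire/faithful/multiplication hypotheses did in several.

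First I would fix $0\neq m\in M$ and $s,s'\in S$ with $sm=s'm$, and use that $S$ is yoked to choose $t\in S$ with either $s+t=s'$ or $s'+t=s$. By symmetry it suffices to treat the first case. Expanding gives $s'm=(s+t)m=sm+tm$, so the hypothesis $sm=s'm$ yields $sm=sm+tm$, that is, $sm+0_M=sm+tm$. Since $M$ is cancellative, I can cancel the summand $sm$ to obtain $tm=0_M$.

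The key step is then to deduce $t=0$ from $tm=0_M$ together with $m\neq 0_M$, which is exactly where torsionfreeness enters. I would argue by contraposition: if $t\neq 0$, then by definition multiplication by $t$ is injective on $M$; since $tm=0_M=t0_M$, injectivity forces $m=0_M$, contradicting $m\neq 0_M$. Hence $t=0$, and therefore $s'=s+t=s$. The second case $s'+t=s$ is handled identically, giving $s=s'$ again, so $M$ is an $MC$ semimodule.

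I expect no serious obstacle here: the only points requiring attention are, first, recognizing that the equation $tm=0_M$ should be rewritten as $tm=t0_M$ so that the torsionfree hypothesis (phrased as injectivity of multiplication by a nonzero scalar) can be applied; and second, noting that cancellativity of $M$ is genuinely needed to pass from $sm=sm+tm$ to $tm=0_M$, since a semimodule has no additive inverses to subtract with.
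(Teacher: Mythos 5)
Your proposal is correct and follows essentially the same route as the paper: use the yoked property to get $t$ with $s+t=s'$ (or symmetrically), cancel additively to obtain $tm=0$, and then invoke torsionfreeness to conclude $t=0$. The paper states the torsionfreeness step slightly more tersely (``$m=0$, a contradiction, thus $t=0$''), but your contrapositive phrasing is the same argument spelled out.
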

\begin{proof}
	Let $0\neq m\in M$ and $s,s^{\prime}\in S$ such that $sm=s^{\prime}m$. Since $S$ is a yoked semiring, there exists $t\in S$ such that $s+t=s^{\prime}$ or $s^{\prime}+t=s$. Suppose that $s+t=s^{\prime}$. Then $sm+tm=s^{\prime}m$. Since $M$ is a cancellative $S$-semimodule, $tm=0$. Since $M$ is a torsionfree $S$-semimodule, $m=0$ which is a contradiction. Thus $t=0$ and hence $s=s^{\prime}$. Now suppose that $s^{\prime}+t=s$. A similar argument shows that $s=s^{\prime}$. Therefore $M$ is an $MC$ semimodule.
\end{proof}
Now, similar to \cite[Lemma 2.10]{E4} we give the following theorem (see also \cite[Theorem 3.2]{E3}).
\begin{theorem}
	\label{18}
	Let $\mathfrak{p}$ be a prime ideal of $S$ and $M$ an $MC$ multiplication semimodule. Let $a\in S$ and $x\in M$ such that $ax\in \mathfrak{p}M$. Then $a\in \mathfrak{p}$ or $x\in \mathfrak{p}M$.
\end{theorem}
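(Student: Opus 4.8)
The plan is to imitate the structure of the proof of Theorem~\ref{17}, working with a suitable ideal and forcing it to be all of $S$. Reducing to the case $a\notin\mathfrak{p}$ (otherwise there is nothing to prove), I would introduce the ideal $J=\{s\in S\mid sx\in\mathfrak{p}M\}$ and aim to show $J=S$, since $1\in J$ is precisely the desired conclusion $x\in\mathfrak{p}M$. Note that $\mathfrak{p}\subseteq J$ and $a\in J$ by hypothesis. The whole argument will rest on the dichotomy of Theorem~\ref{1}, applied not to $\mathfrak{p}$ itself but to a maximal ideal containing $J$.

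Before that I would record the two facts that make the $MC$ hypothesis do its work. First, as already noted in the text, $MC$ implies $M$ is faithful, and more precisely that the annihilator of every nonzero element of $M$ is $\{0\}$. Consequently, for any proper ideal $\mathfrak{q}$ the first alternative of Theorem~\ref{1}, namely $M=\{m\mid m=qm \text{ for some } q\in\mathfrak{q}\}$, can hold only when $M=\{0\}$: a nonzero $m$ with $m=qm$ would give $1m=qm$, hence $1=q\in\mathfrak{q}$ by cancellation, which is absurd. The case $M=\{0\}$ is trivial, so assuming $M\neq\{0\}$ I may conclude that every maximal ideal falls into the $\mathfrak{p}$-cyclic alternative.

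Now suppose, for contradiction, that $J\neq S$. By \cite[Proposition 6.59]{G2} there would be a maximal ideal $\mathfrak{m}$ with $J\subseteq\mathfrak{m}$, and by the previous paragraph $M$ would be $\mathfrak{m}$-cyclic: there are $m\in M$, $t\in S$ and $q\in\mathfrak{m}$ with $t+q=1$ and $tM\subseteq Sm$, where $t\notin\mathfrak{m}$ and $m\neq 0$ (since $tM=\{0\}$ together with faithfulness would force $t=0$, hence $1=q\in\mathfrak{m}$). The key computation is $a(tx)=t(ax)\in t\mathfrak{p}M=\mathfrak{p}(tM)\subseteq\mathfrak{p}(Sm)=\mathfrak{p}m$; writing $tx=cm$ for some $c\in S$ (possible as $tx\in tM\subseteq Sm$), this gives $(ac)m\in\mathfrak{p}m$, so $(ac)m=pm$ for some $p\in\mathfrak{p}$, and cancelling the nonzero $m$ yields $ac=p\in\mathfrak{p}$. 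Since $\mathfrak{p}$ is prime and $a\notin\mathfrak{p}$, this forces $c\in\mathfrak{p}$, whence $tx=cm\in\mathfrak{p}m\subseteq\mathfrak{p}M$, i.e. $t\in J\subseteq\mathfrak{m}$, contradicting $t\notin\mathfrak{m}$. Therefore $J=S$ and $x\in\mathfrak{p}M$.

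I expect the only delicate points to be the two places where the $MC$ property is invoked: eliminating the first branch of the dichotomy (which is why $M=\{0\}$ must be treated separately and why one must check $m\neq 0$), and the final cancellation of $m$ to pass from $(ac)m=pm$ to $ac=p$, after which primeness of $\mathfrak{p}$ extracts $c\in\mathfrak{p}$. Everything else is the same bookkeeping with the relation $t+q=1$ that already appears in Theorem~\ref{17}, and notably no additive cancellation in $S$ or in $M$ is required.
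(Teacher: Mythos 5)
Your proposal is correct and follows essentially the same route as the paper: the same ideal $K=\{s\in S\mid sx\in\mathfrak{p}M\}$, the same appeal to Theorem~\ref{1} at a maximal ideal containing it, and the same computation $tax\in\mathfrak{p}m$ followed by $MC$-cancellation and primeness of $\mathfrak{p}$. The only difference is that you spell out two details the paper leaves implicit (why the first branch of the dichotomy is excluded for an $MC$ semimodule, and why the generator $m$ is nonzero before cancelling), which is a welcome but inessential refinement.
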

\begin{proof}
	Let $a\notin \mathfrak{p}$ and 
	put $K=\{s\in S\mid sx\in \mathfrak{p}M \}$. If $K\neq S$, there exists a maximal ideal $\mathfrak{q}$ of $S$ such that $K\subseteq \mathfrak{q}$. By Theorem \ref{1}, $M$ is $\mathfrak{q}$-cyclic since $M$ is an $MC$ semimodule. Therefore there exist $m\in M$, $t\in S$ and $q\in \mathfrak{q}$ such that $t+q=1$ and  $tM\subseteq Sm$. Thus $tx=sm$ for some $s\in S$. Moreover, $t\mathfrak{p}M\subseteq \mathfrak{p}m$. Hence $tax\in t\mathfrak{p}M\subseteq \mathfrak{p}m$. Therefore $tax=p_1m$ for some $p_1\in \mathfrak{p}$ and hence $asm=p_1m$. Since $M$ is an $MC$ semimodule, $as=p_1\in \mathfrak{p}$ and since $\mathfrak{p}$ is a prime ideal, $s\in \mathfrak{p}$ . Then $tx=sm\in \mathfrak{p}M$ and hence $t\in K\subseteq \mathfrak{q}$ which is a contradiction. Thus $K=S$. Therefore $x\in \mathfrak{p}M$. 
\end{proof}
\begin{lemma}(cf. \cite{A})
	\label{2}
	Suppose that $S$ is a semiring. Let $M$ be an $S$-semimodule and $\theta(M)=\sum_{m\in M}(Sm:M)$. If $M$ is a multiplication $S$-semimodule, then $M=\theta(M)M$.
\end{lemma}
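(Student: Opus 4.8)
The plan is to prove the two inclusions $\theta(M)M \subseteq M$ and $M \subseteq \theta(M)M$ separately. The first is immediate: $\theta(M) = \sum_{m\in M}(Sm:M)$ is a sum of ideals and hence an ideal of $S$, so $\theta(M)M$ is a subsemimodule of $M$. All the content lies in the reverse inclusion, and I would establish it pointwise, fixing an arbitrary $x \in M$ and showing $x \in \theta(M)M$.

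For this I would apply the multiplication hypothesis to the cyclic subsemimodule $Sx$. By the remark recorded just after the definition of a multiplication semimodule, every subsemimodule $N$ satisfies $N = (N:M)M$; specializing to $N = Sx$ gives $Sx = (Sx:M)M$. Since $x = 1x \in Sx$, this already exhibits $x$ as an element of $(Sx:M)M$.

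The key observation is then purely formal: $(Sx:M)$ is exactly the summand indexed by $m = x$ in the definition $\theta(M) = \sum_{m\in M}(Sm:M)$, so $(Sx:M) \subseteq \theta(M)$ and therefore $(Sx:M)M \subseteq \theta(M)M$. Chaining the two facts yields $x \in Sx = (Sx:M)M \subseteq \theta(M)M$. As $x$ was arbitrary, $M \subseteq \theta(M)M$, which combined with the trivial inclusion gives $M = \theta(M)M$.

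I do not expect a genuine obstacle here. One might initially be tempted to imitate the earlier arguments by localizing and splitting into the two cases of Theorem \ref{1} (either $M = \mathfrak{p}M$ or $M$ is $\mathfrak{p}$-cyclic), but this is unnecessary: the identity $N = (N:M)M$ does all the work, and crucially it involves no subtraction or cancellation, so it transfers verbatim to the semiring setting. The only points meriting a line of verification are that $\theta(M)$ is indeed an ideal and that $\theta(M)M$ is a subsemimodule, both of which are routine, and that $x$ genuinely lies in its own cyclic subsemimodule $Sx$, which holds since $1x = x$.
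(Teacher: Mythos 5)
Your proof is correct and coincides with the paper's own argument: both apply the identity $N=(N:M)M$ to the cyclic subsemimodule $Sx$ and observe that $(Sx:M)\subseteq\theta(M)$. Nothing further is needed.
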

\begin{proof}
	Suppose that $m\in M$. Then $Sm=(Sm:M)M$. Thus $m\in (Sm:M)M\subseteq \theta(M)M$. Therefore $M=\theta(M)M$. 
\end{proof}
\begin{theorem}(cf. \cite[Theorem 3.1]{E4})
	\label{3}
	Let $S$ be a semiring and $M$ an $MC$ multiplication $S$-semimodule. Then the following statements
	hold:
	\begin{enumerate} 
		\item If $I$ and $J$ are ideals of $S$ such that $IM\subseteq JM$ then $I\subseteq J$.
		\item $M\neq IM$ for any proper ideal $I$ of $S$.\label{4}
		\item M is finitely generated.
	\end{enumerate} 
\end{theorem}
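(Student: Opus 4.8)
The plan is to establish the three assertions in the order (1), (2), (3), because once the cancellation law (1) is available both (2) and the finite generation in (3) follow quickly (the latter using Lemma \ref{2}). So the real content is part (1), and I would attack it first.

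For (1), I would first reduce to the principal case: if I can show that $aM\subseteq JM$ forces $a\in J$ for a single element $a\in S$, then applying this to each $a\in I$ (noting $aM\subseteq IM\subseteq JM$) yields $I\subseteq J$. To handle the principal case, set $K=(J:a)=\{s\in S\mid sa\in J\}$, which is an ideal, and suppose toward a contradiction that $K\neq S$. Then $K$ lies in some maximal ideal $\mathfrak{q}$. By Theorem \ref{1} applied to $\mathfrak{q}$, either $M=\{m\mid m=qm \text{ for some } q\in\mathfrak{q}\}$ or $M$ is $\mathfrak{q}$-cyclic. The first alternative is impossible, because $M$ is $MC$ and faithful (hence nonzero), so a nonzero $m$ with $m=qm=1\cdot m$ would give $1=q\in\mathfrak{q}$. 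Thus $M$ is $\mathfrak{q}$-cyclic: there are $n\in M$, $t\in S$, $q\in\mathfrak{q}$ with $t+q=1$ and $tM\subseteq Sn$.

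Next I would extract the contradiction. Since $t+q=1$ with $q\in\mathfrak{q}$, we have $t\notin\mathfrak{q}$; moreover $n\neq 0$, for otherwise $tM=\{0\}$ forces $t=0$ by faithfulness and then $q=1\in\mathfrak{q}$. The key computation is $taM=t(aM)\subseteq t(JM)=J(tM)\subseteq J(Sn)=Jn$; evaluating at $n\in M$ gives $(ta)n=cn$ for some $c\in J$, and cancelling $n$ via the $MC$ property yields $ta=c\in J$, that is, $t\in K\subseteq\mathfrak{q}$, contradicting $t\notin\mathfrak{q}$. Hence $K=S$, so $a=1\cdot a\in J$. With (1) in hand, (2) is immediate: if $M=IM$ then $SM\subseteq IM$, so (1) gives $S\subseteq I$, i.e.\ $I=S$; equivalently, no proper ideal $I$ satisfies $M=IM$. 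For (3), Lemma \ref{2} gives $M=\theta(M)M$ with $\theta(M)=\sum_{m\in M}(Sm:M)$, so by (2) we must have $\theta(M)=S$. Writing $1=\sum_{i=1}^{n}a_i$ with $a_i\in(Sm_i:M)$ for finitely many $m_i\in M$, every $m\in M$ satisfies $m=\sum_i a_i m\in\sum_i Sm_i$, whence $M=\sum_{i=1}^{n} Sm_i$ is finitely generated.

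The main obstacle I anticipate is part (1), and within it the step that produces the $\mathfrak{q}$-cyclic generator $n$ and legitimately cancels it. I must be careful that the first alternative of Theorem \ref{1} is genuinely excluded — this is precisely where $MC$ together with nonzeroness enters — and that the generator $n$ is nonzero before invoking multiplicative cancellation; both points hinge on faithfulness of $M$, which the $MC$ hypothesis guarantees. By contrast, the accompanying identities $t(JM)=J(tM)$ and $Jn=\{cn\mid c\in J\}$ are routine and remain valid in the semimodule setting, and (2) and (3) are then formal consequences.
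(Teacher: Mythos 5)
Your proposal is correct and follows essentially the same route as the paper: form $K=\{s\in S\mid sa\in J\}$, embed it in a maximal ideal, invoke Theorem \ref{1} to get $\mathfrak{q}$-cyclicity, cancel the generator to force $t\in K$, and then derive (2) formally and (3) via Lemma \ref{2} and $\theta(M)=S$. The only difference is that you make explicit two points the paper leaves implicit — why the first alternative of Theorem \ref{1} is excluded for an $MC$ semimodule and why the cyclic generator is nonzero before cancelling — which is a welcome refinement rather than a deviation.
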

\begin{proof}
	$(1)$ Let $IM\subseteq JM$ and $a\in I$. Set $K=\{ s\in S\mid sa\in J \}$. If $K\neq S$, there exists a maximal ideal $\mathfrak{p}$ of $S$ such that $K\subseteq \mathfrak{p}$. By Theorem \ref{1}, $M$ is $\mathfrak{p}$-cyclic since $M$ is an $MC$ semimodule. Thus there exist $m\in M$, $t\in S$ and  $q\in \mathfrak{p}$ such that $t+q=1$ and  $tM\subseteq Sm$. Then $tam\in tIM\subseteq tJM=JtM\subseteq Jm$. Hence there exists $b\in J$ such that $tam=bm$. Since $M$ is an $MC$ semimodule, $ta=b\in J$. Thus $t\in K\subseteq \mathfrak{p}$ which is a contradiction. Therefore $K=S$ and hence $I\subseteq J$. 
	
	$(2)$ Follows by $(1)$

	$(3)$ By Lemma \ref{2}, $M=\theta(M)M$, where $\theta(M)=\sum_{m\in M}(Sm:M)$. Then by \ref{4}, $\theta(M)=S$. Thus there exist a positive integer $n$ and elements $m_i\in M$ and $r_i\in (Sm_i:M)$ such
	that $1=r_1+\ldots+r_n$. If $m\in M$, then $m=r_1m+\ldots+r_nm$. Therefore $M=Sm_1+\ldots+Sm_n$. 
\end{proof}
By Lemma \ref{12}, we have the following result.
\begin{corollary}
	Let $S$ be a yoked entire semiring and $M$ a cancellative faithful multiplication $S$-semimodule.
	Then the following statements
	hold:
	\begin{enumerate} 
		\item If $I$ and $J$ are ideals of $S$ such that $IM\subseteq JM$ then $I\subseteq J$.
		\item $M\neq IM$ for any proper ideal $I$ of $S$.
		\item M is finitely generated.
	\end{enumerate} 
\end{corollary}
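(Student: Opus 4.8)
The plan is to recognize that the hypotheses of this corollary are precisely the hypotheses of Lemma~\ref{12}, while its three conclusions are verbatim the three conclusions of Theorem~\ref{3}. So the entire proof should consist of bridging these two earlier results rather than reproving anything. First I would invoke Lemma~\ref{12}, which takes the hypothesis ``$S$ yoked entire, $M$ cancellative faithful multiplication'' and upgrades it to the assertion that $M$ is an $MC$ semimodule.

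Having secured that $M$ is $MC$, I would observe that $M$ is now simultaneously an $MC$ semimodule and (by the standing hypothesis of the corollary) a multiplication $S$-semimodule. This is exactly the hypothesis required by Theorem~\ref{3}. Applying Theorem~\ref{3} directly then delivers all three statements---(1), (2), and (3)---with no additional computation.

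The only point worth checking is that the step through Lemma~\ref{12} does not cost us the multiplication property needed by Theorem~\ref{3}; but $M$ is assumed to be a multiplication semimodule in the corollary's hypothesis and Lemma~\ref{12} only adjoins the $MC$ conclusion without touching that assumption, so there is no genuine obstacle. In short, this is a pure corollary: the mathematical content resides entirely in Lemma~\ref{12} and Theorem~\ref{3}, and the argument is a one-line composition of the two.
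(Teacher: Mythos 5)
Your proposal matches the paper exactly: the paper introduces this corollary with ``By Lemma \ref{12}, we have the following result,'' i.e.\ it uses Lemma \ref{12} to conclude that $M$ is an $MC$ semimodule and then applies Theorem \ref{3}. Nothing is missing.
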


An $S$-semimodule $M$ is called a cancellation semimodule if whenever $IM=JM$ for ideals $I$ and $J$ of $S$, then $I=J$\cite{F}. By Theorem \ref{3}, every $MC$ multiplication semimodule is a cancellation semimodule.

In \cite[Lemma 4.1]{E4} it is shown that faithful multiplication modules are torsion-free. Similarly, we have the following result.
\begin{theorem}
	\label{6}
	Suppose that $S$ is a semidomain. If $M$ is an $MC$ multiplication $S$-semimodule, then $M$ is a torsionfree $S$-semimodule.
\end{theorem}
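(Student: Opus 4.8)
The plan is to mimic the localization-and-$\mathfrak{p}$-cyclic argument that already powers Theorem~\ref{18} and Theorem~\ref{3}. Fix $0\neq a\in S$ and $x,y\in M$ with $ax=ay$; I want to conclude $x=y$. The device is to collect all scalars that already equalize $x$ and $y$ into the set $K=\{s\in S\mid sx=sy\}$ and to prove $K=S$, for then $1\in K$ gives $x=y$ directly. A quick check shows $K$ is an ideal of $S$ (closure under addition uses the distributive law, and absorption is immediate), and the hypothesis $ax=ay$ places $a\in K$; the point of the proof is that $K$ cannot miss any maximal ideal.

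So I would argue by contradiction: if $K\neq S$ then $K\subseteq\mathfrak{p}$ for some maximal ideal $\mathfrak{p}$ (every proper ideal lies in a maximal one, \cite[Proposition 6.59]{G2}). Since $M$ is $MC$, the first alternative of Theorem~\ref{1} is vacuous for any nonzero $M$: if $m=qm$ with $q\in\mathfrak{p}$ and $m\neq 0$, then $1\cdot m=q\cdot m$ forces $1=q\in\mathfrak{p}$. Hence $M$ is $\mathfrak{p}$-cyclic (the case $M=\{0\}$ being trivially torsionfree). Thus there are $m\in M$, $t\in S$ and $q\in\mathfrak{p}$ with $t+q=1$ and $tM\subseteq Sm$; note $t\notin\mathfrak{p}$, and $m\neq 0$ because $tM=\{0\}$ would contradict the faithfulness of the $MC$ semimodule $M$.

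Next I would write $tx=sm$ and $ty=s'm$ for suitable $s,s'\in S$. Multiplying $ax=ay$ by $t$ and using commutativity gives $a(tx)=t(ax)=t(ay)=a(ty)$, that is, $(as)m=(as')m$. The $MC$ property (with $m\neq0$) yields $as=as'$, and since $S$ is a semidomain and $a\neq 0$ this cancels to $s=s'$. Hence $tx=sm=s'm=ty$, so $t\in K\subseteq\mathfrak{p}$; combined with $t+q=1$ and $q\in\mathfrak{p}$ this forces $1\in\mathfrak{p}$, a contradiction. Therefore $K=S$, whence $x=y$ and $M$ is torsionfree.

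The routine parts (that $K$ is an ideal and that $m\neq0$) are immediate; the real content, and the only place where all three hypotheses are used at once, is the chain $(as)m=(as')m\Rightarrow as=as'\Rightarrow s=s'$, which needs $MC$ to strip the semimodule element $m$ and the semidomain property to strip the nonzero scalar $a$. I expect the main obstacle to be purely organizational, namely making sure the $\mathfrak{p}$-cyclic alternative of Theorem~\ref{1} is genuinely forced at the chosen $\mathfrak{p}$; this is exactly where faithfulness (a consequence of $MC$) and the exclusion of the degenerate alternative do their work.
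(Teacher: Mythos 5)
Your argument is correct, but it takes a genuinely different route from the paper's. The paper argues globally: it writes $Sx=IM$ and $Sy=JM$, uses the fact (from Theorem \ref{3}) that an $MC$ multiplication semimodule is a cancellation semimodule to get $tI=tJ$ from $tIM=tJM$, cancels the nonzero $t$ elementwise in the semidomain to conclude $I=J$ and hence $Sx=Sy$, and then finishes with one more $MC$-plus-semidomain cancellation. You instead run the local-global template that the paper uses for Theorems \ref{17}, \ref{18} and \ref{3}(1): form the ideal $K=\{s\mid sx=sy\}$, push a putative maximal ideal $\mathfrak{p}\supseteq K$ through the $\mathfrak{p}$-cyclic alternative of Theorem \ref{1} (the other alternative being excluded by $MC$, exactly as the paper itself does in Theorem \ref{18}), and derive the contradiction $t\in\mathfrak{p}$ from $asm=as'm\Rightarrow as=as'\Rightarrow s=s'$. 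Your small bookkeeping points ($K$ is an ideal, $t\notin\mathfrak{p}$ hence $t\neq 0$ hence $m\neq 0$ by faithfulness, and the trivial case $M=\{0\}$) are all handled correctly. What each approach buys: the paper's proof is shorter once Theorem \ref{3} is in hand, since the cancellation property does the heavy lifting in one line; yours bypasses Theorem \ref{3} entirely and is more uniform with the other localization-style proofs in the paper, at the cost of re-running the maximal-ideal machinery. Both use the semidomain hypothesis in the same essential way, to cancel the nonzero scalar after $MC$ has stripped the semimodule element.
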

\begin{proof}
	Suppose that there exist $0\neq t\in S$ and $m,m^{\prime}\in M$ such that $tm=tm^{\prime}$. Then $Sm=IM$ and $Sm^{\prime}=JM$ for some ideals $I, J$ of $S$. Thus $tIM=tJM$ since $tm=tm^{\prime}$. Since $M$ is an $MC$ multiplication $S$-semimodule, it is a cancellation semimodule. Thus $tI=tJ$. Let $x\in I$. Then $tx=tx^{\prime}$ for some $x^{\prime}\in J$. Since $S$ is a semidomain, $x=x^{\prime}$. Therefore $I\subseteq J$. Similarly $J\subseteq I$. Hence $I=J$ and $Sm=Sm^{\prime}$. Then there exists $s_1\in S$ such that $m=s_1m^{\prime}$. Thus $tm^{\prime}=tm=ts_1m^{\prime}$. Since $M$ is an $MC$ semimodule, $t=s_1t$. Since $S$ is a semidomain, $s_1=1$. Therefore $m=m^{\prime}$ and hence $M$ is torsionfree. 
\end{proof}
If $M$ is a finitely generated faithful multiplication module, then $M$ is a projective module \cite[Theorem 11]{S2}. Similarly, we have the following theorem: 
\begin{theorem}
	Let $M$ be an $MC$ multiplication semimodule. Then $M$ is a projective \linebreak$S$-semimodule.
\end{theorem}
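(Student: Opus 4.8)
The goal is to prove that an $MC$ multiplication $S$-semimodule $M$ is projective. By Theorem \ref{3}, such an $M$ is finitely generated, so by Lemma \ref{2} and part \eqref{4} of Theorem \ref{3} we have $\theta(M)=S$, where $\theta(M)=\sum_{m\in M}(Sm:M)$. This means there exist finitely many elements $m_1,\ldots,m_n\in M$ and scalars $r_i\in(Sm_i:M)$ with $1=r_1+\cdots+r_n$. The plan is to use these data to construct an explicit splitting of a free cover, in the spirit of the dual basis lemma for projective modules.

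\begin{proof}
By Theorem \ref{3}, $M$ is finitely generated, say $M=Sm_1+\cdots+Sm_n$, and by Lemma \ref{2} together with Theorem \ref{3}\eqref{4} we have $\theta(M)=S$. Hence there are elements (which we may take to be among the generators) $m_1,\ldots,m_n$ and scalars $r_i\in(Sm_i:M)$ with $1=r_1+\cdots+r_n$. For each $i$, since $r_i\in(Sm_i:M)$, every $m\in M$ satisfies $r_i m\in Sm_i$, so there is a scalar we may call $\varphi_i(m)\in S$ with $r_i m=\varphi_i(m)m_i$. To make this a well-defined $S$-homomorphism $\varphi_i:M\to S$ we use that $M$ is $MC$: if $\varphi_i(m)m_i=\varphi_i(m)'m_i$ then, provided $m_i\neq 0$, cancellation forces the two scalars to agree, so $\varphi_i(m)$ is uniquely determined (and for $m_i=0$ the corresponding $r_i m=0$ and we set $\varphi_i=0$). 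One then checks directly, using uniqueness, that $\varphi_i(m+m')=\varphi_i(m)+\varphi_i(m')$ and $\varphi_i(sm)=s\varphi_i(m)$.

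Now consider the free semimodule $F=S^{n}$ with standard basis $e_1,\ldots,e_n$, and the surjection $\pi:F\to M$, $e_i\mapsto m_i$. Define $\psi:M\to F$ by $\psi(m)=\sum_{i=1}^{n}\varphi_i(m)e_i$. Then $\psi$ is an $S$-homomorphism, and
\[
\pi(\psi(m))=\sum_{i=1}^{n}\varphi_i(m)m_i=\sum_{i=1}^{n}r_i m=\Bigl(\sum_{i=1}^{n}r_i\Bigr)m=m,
\]
so $\pi\circ\psi=\mathrm{id}_M$. Thus $\psi$ splits the surjection $\pi$, exhibiting $M$ as a retract of the free semimodule $F$, and therefore $M$ is projective.

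The main obstacle is the construction of the maps $\varphi_i$: in the module setting one simply inverts multiplication by $m_i$ on the cyclic image, but over a semiring there is no subtraction to fall back on, so one must verify that $\varphi_i$ is genuinely well-defined and additive. This is exactly where the $MC$ hypothesis does the work, since it supplies the cancellation needed to pin down $\varphi_i(m)$ uniquely from the equation $r_i m=\varphi_i(m)m_i$ and hence to prove additivity and $S$-linearity. Once the $\varphi_i$ are in hand, the splitting identity $\pi\circ\psi=\mathrm{id}_M$ is an immediate consequence of $\sum_i r_i=1$, and the dual-basis characterization of projectivity (which holds for semimodules: a retract of a free semimodule is projective) finishes the argument.
\end{proof}
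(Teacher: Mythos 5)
Your proof is correct and follows essentially the same route as the paper: both establish $\theta(M)=S$ via Theorem \ref{3}, use the $MC$ hypothesis to show that the coordinate maps $\varphi_i$ determined by $r_im=\varphi_i(m)m_i$ are well defined, and conclude by the dual basis criterion (the paper cites it directly, you unfold it as a splitting of $S^n\twoheadrightarrow M$). The only difference is cosmetic: the paper writes $1=\sum_i s_ir_i^2$ and sets $\phi_i(m)=s_ir_ia$, whereas you work directly with $1=\sum_i r_i$, which is a slightly cleaner choice of the same construction.
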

\begin{proof}
	By Theorem \ref{3}, $\theta(M)=\sum_{i}^n(Sm_i:M)=S$. Thus for each $1\leq i\leq n$, there exist $r_i\in (Sm_i:M)$ and $s_i\in S$ such that $1=s_1r_1^2+\ldots+s_nr_n^2$. Define a map $\phi_i:M\rightarrow S$ by $\phi_i:m\mapsto s_ir_ia$ where $a$ is an element of $S$ such that $r_im=am_i$. Suppose that $am_i=bm_i$ for some $b\in S$. Since $M$ is an $MC$ semimodule, $a=b$ and therefore $\phi_i$ is a well defined $S$-homomorphism. Let $m\in M$. Then $m=1m= s_1r_1^2m+\cdots+s_nr_n^2m=\phi_1(m)m_1+\cdots+\phi_n(m)m_n$. By \cite[Theorem 3.4.12]{S1}, $M$ is a projective $S$-semimodule.
\end{proof}
By Lemma \ref{12}, we obtain the following result.
\begin{corollary}
	Let $S$ be a yoked entire semiring and $M$ a cancellative faithful multiplication $S$-semimodule. Then $M$ is a projective $S$-semimodule.
\end{corollary}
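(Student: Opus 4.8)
The plan is to establish projectivity through the dual basis characterization for semimodules recorded in \cite[Theorem 3.4.12]{S1}: it suffices to produce finitely many generators $m_1,\dots,m_n\in M$ together with $S$-homomorphisms $\phi_1,\dots,\phi_n:M\rightarrow S$ such that $m=\sum_{i=1}^n\phi_i(m)m_i$ for every $m\in M$. The raw material comes from Theorem \ref{3} and Lemma \ref{2}: since $M$ is an $MC$ multiplication semimodule it is finitely generated and, writing $\theta(M)=\sum_{m\in M}(Sm:M)$, we have $\theta(M)=S$. Hence there are elements $m_1,\dots,m_n\in M$ and $r_i\in(Sm_i:M)$ with $1=r_1+\cdots+r_n$ and $M=\sum_i Sm_i$; discarding any $m_i=0$ (for which $(Sm_i:M)=(0:M)=0$ because $M$ is faithful) we may assume every $m_i\neq 0$.

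First I would upgrade the partition of unity $1=\sum_i r_i$ to one involving squares. Expanding $1=(\sum_i r_i)^{n+1}$ as a sum of monomials $r_{i_1}\cdots r_{i_{n+1}}$ with nonnegative integer coefficients and observing that each monomial has $n+1$ factors drawn from $n$ elements, the pigeonhole principle forces some index to repeat, so every monomial is a multiple of some $r_i^2$. Grouping the terms accordingly yields scalars $s_1,\dots,s_n\in S$ with $1=s_1r_1^2+\cdots+s_nr_n^2$, which is the identity that drives the rest of the argument.

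Next I would define the coordinate maps. For $m\in M$ and each $i$, the inclusion $r_i\in(Sm_i:M)$ gives $r_im\in Sm_i$, so $r_im=am_i$ for some $a\in S$; since $m_i\neq 0$ and $M$ is $MC$, this $a$ is unique, so setting $\phi_i(m)=s_ir_ia$ is unambiguous. The same uniqueness makes $\phi_i$ additive and $S$-linear, since $r_i(m+m')=(a+a')m_i$ and $r_i(cm)=(ca)m_i$ determine the relevant $a$-values. Finally I would verify the dual basis identity: for any $m$, using $am_i=r_im$,
$$\sum_{i=1}^n\phi_i(m)\,m_i=\sum_{i=1}^n s_ir_i(am_i)=\sum_{i=1}^n s_ir_i^2 m=\Big(\sum_{i=1}^n s_ir_i^2\Big)m=m,$$
so $\{m_i,\phi_i\}$ is a dual basis and $M$ is projective.

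I expect the main obstacle to be the passage from $\theta(M)=S$ to the squared partition of unity $1=\sum_i s_ir_i^2$: over a semiring one cannot cancel cross terms, so the clean ring-theoretic manipulation of simply squaring $1=\sum_i r_i$ is unavailable, and it is the power-and-pigeonhole device that makes the coordinate functions land correctly. A secondary point that uses the $MC$ hypothesis essentially is the well-definedness of $\phi_i$, since without multiplicative cancellation the scalar $a$ with $r_im=am_i$ need not be unique and $\phi_i$ would fail to be a genuine function.
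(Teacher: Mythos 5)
Your dual-basis argument is, in substance, the paper's proof of the theorem immediately preceding this corollary (every $MC$ multiplication semimodule is projective): the same generators $m_i$ and coefficients $r_i\in(Sm_i:M)$ coming from $\theta(M)=S$, the same maps $\phi_i(m)=s_ir_ia$ with $r_im=am_i$, and the same appeal to \cite[Theorem 3.4.12]{S1}. Your pigeonhole derivation of the squared partition of unity $1=s_1r_1^2+\cdots+s_nr_n^2$ from $1=r_1+\cdots+r_n$ fills in a step the paper merely asserts, and the remark that zero generators can be discarded is a worthwhile detail; the verification of well-definedness, $S$-linearity, and $\sum_i\phi_i(m)m_i=m$ is correct.

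The gap is at the very first line: the corollary does not hypothesize that $M$ is $MC$. Its hypotheses are that $S$ is yoked and entire and that $M$ is a cancellative faithful multiplication semimodule, and you pass from these to ``since $M$ is an $MC$ multiplication semimodule'' with no argument. That implication is exactly Lemma \ref{12}: given $sm=s'm$ with $m\neq 0$, yokedness produces $t$ with $s+t=s'$ (or symmetrically), cancellativity of $M$ gives $tm=0$, writing $Sm=IM$ and using faithfulness gives $tI=\{0\}$, and entirety forces $t=0$, so $s=s'$. Invoking Lemma \ref{12} and then citing the preceding theorem is the \emph{entire} content of the paper's proof of this corollary, so the one step you omit is the only step actually at issue here. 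As written, your argument never uses the yoked or entire hypotheses --- a sign that something is missing --- and the well-definedness of the $\phi_i$, which you correctly identify as resting on multiplicative cancellation, is not justified from the stated hypotheses until that reduction is supplied.
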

\begin{theorem}\cite[Lemma 3.6]{E4}
	Let $S$ be a semidomain and let $M$ be an $MC$ multiplication $S$-semimodule. Then there exists an invertible ideal $I$ of $S$ such that $M\cong I$.
\end{theorem}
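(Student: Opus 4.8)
The plan is to embed $M$ into the total quotient semiring $Q(S)$ as a fractional ideal, clear denominators so that the image lands inside $S$, and then apply the recalled characterization of invertible ideals (an ideal is invertible iff it is a multiplication semimodule containing an $MC$ element, from \cite{G1}). First I would record what being a semidomain buys us: taking $c=0$ in the cancellation law shows $S$ is entire, so $(0)$ is prime and $MC(S)=S\setminus\{0\}$. Hence $S_{(0)}=Q(S)$, which by \cite[Theorem 4.5]{K} is a local semiring; in fact every nonzero $a/s$ has inverse $s/a$, so its maximal ideal is $\{0\}$ and $Q(S)$ is a semifield. We may assume $M\neq 0$, since an invertible ideal contains an $MC$ element and so cannot be isomorphic to the zero semimodule.

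Next I would localize. By Theorem \ref{6}, $M$ is torsionfree, which makes the canonical map $M\to M_{(0)}$, $m\mapsto m/1$, injective (if $m/1=m'/1$ then $tm=tm'$ for some $t\neq 0$, whence $m=m'$), so $M_{(0)}\neq 0$. Exactly as in the proof that localization preserves the multiplication property, using Remark \ref{16}, item (\ref{14}), $M_{(0)}$ is a multiplication $Q(S)$-semimodule; and since the maximal ideal $\{0\}$ of $Q(S)$ satisfies $\{0\}\cdot M_{(0)}=\{0\}\neq M_{(0)}$, Corollary \ref{13} yields that $M_{(0)}$ is cyclic, say $M_{(0)}=Q(S)\,\xi$ with $\xi\neq 0$.

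Now I would build the embedding $\phi\colon M\to Q(S)$. A direct computation, using that $M$ is $MC$ and that nonzero elements of $S$ cancel, shows $M_{(0)}$ is an $MC$ $Q(S)$-semimodule; in particular $q\xi=q'\xi$ forces $q=q'$. Thus for each $m\in M$ there is a \emph{unique} $q_m\in Q(S)$ with $m/1=q_m\xi$, and $\phi(m):=q_m$ defines an $S$-homomorphism (additivity and $S$-linearity follow from uniqueness of $q_m$), injective because $M\to M_{(0)}$ is. By Theorem \ref{3}, $M=\sum_{i=1}^n Sm_i$ is finitely generated; writing $\phi(m_i)=a_i/b_i$ with $b_i\neq 0$ and putting $d=\prod_{i=1}^n b_i\neq 0$, $S$-linearity gives $d\,\phi(M)\subseteq S$. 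Set $J=d\,\phi(M)$: it is an ideal of $S$, and $m\mapsto d\,\phi(m)$ is an $S$-isomorphism $M\cong J$ (injective since $\phi$ is and $d$ is a unit of $Q(S)$). As an isomorphic image of the multiplication semimodule $M$, $J$ is a multiplication semimodule; and $J$ contains an $MC$ element, since for $0\neq m\in M$ the element $d\,\phi(m)=\phi(dm)$ is a nonzero member of $S$ (torsionfreeness gives $dm\neq 0$ and $\phi$ is injective), hence $MC$ because $S$ is a semidomain. The invertibility criterion then shows $J$ is invertible, and $M\cong J$ as required.

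The main obstacle I anticipate is the passage to $M_{(0)}$: one must verify that $M_{(0)}$ is simultaneously \emph{cyclic} and \emph{$MC$} over $Q(S)$, because both are needed for the generator $\xi$ to produce a well-defined and injective map $\phi\colon M\to Q(S)$ rather than merely a surjection onto $M_{(0)}$. Everything afterward — clearing denominators and invoking the invertibility criterion — is routine once finite generation is supplied by Theorem \ref{3}.
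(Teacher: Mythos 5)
Your argument is correct, but it takes a genuinely different (and considerably longer) route than the paper's. The paper's own proof is almost immediate: pick $0\neq m\in M$ and write $Sm=JM$ for some ideal $J$ (necessarily nonzero); choosing $0\neq a\in J$, the map $x\mapsto ax$ sends $M$ into $Sm$ and is injective by torsionfreeness (Theorem \ref{6}), while $s\mapsto sm$ is an isomorphism $S\cong Sm$ because $M$ is $MC$; composing, $M$ is isomorphic to an ideal $I$ of $S$, which is a multiplication ideal containing a nonzero, hence $MC$, element, so the invertibility criterion applies. Your detour through the total quotient semiring --- observing that $Q(S)$ is local with maximal ideal $\{0\}$, getting cyclicity of $M_{(0)}$ from Corollary \ref{13}, verifying that $M_{(0)}$ is $MC$ so that the coordinate map $\phi\colon M\to Q(S)$ is well defined and injective, and then clearing denominators using finite generation from Theorem \ref{3} --- arrives at the same embedding of $M$ as an ideal of $S$, and each step checks out (the injectivity of $M\to M_{(0)}$ and the claim that $m/u\neq 0$ forces $m\neq 0$ both rest on Theorem \ref{6}, which you invoke, and your handling of the degenerate case $M=0$ is a point the paper leaves implicit). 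What the paper's approach buys is economy: it needs neither finite generation nor any localization, only the single relation $Sm=JM$ supplied by the multiplication hypothesis. What your approach buys is the classical conceptual picture that $M$ is realized as a fractional ideal of $S$ inside $Q(S)$, at the cost of importing Theorem \ref{3}, Corollary \ref{13}, and the localization machinery of Remark \ref{16} as prerequisites.
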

\begin{proof}
	Suppose that $0\neq m\in M$. Then there exists an ideal $J$ of $S$ such that $Sm=JM$. Let $0\neq a\in J$. We can define an $S$-homomorphism $\phi:M\rightarrow Sm$ by $\phi:x\mapsto ax$. Let $x,x^{\prime}\in M$ such that $ax=ax^{\prime}$. By Theorem \ref{6}, $M$ is torsionfree and hence $x=x^{\prime}$. Therefore $\phi$ is injective and so $M\cong f(M)$. Now define an $S$-homomorphism $\phi^{\prime}:S\rightarrow Sm$ by $\phi^{\prime}(s)=sm$. Let $s,s^{\prime}\in S$ such that $sm=s^{\prime}m$. Since $M$ is an $MC$ semimodule, $s=s^{\prime}$. Therefore $\phi^{\prime}$ is injective. It is clear that $\phi^{\prime}$ is surjective. Therefore $S\cong Sm$. Hence $M$ is isomorphic to an ideal $I$ of $S$. Thus $I$
	is a multiplication ideal and hence an invertible ideal of $S$.
\end{proof}
\section{Cancellative multiplication semimodule}\label{sec:3} 
In this section, we investigate cancellative multiplication semimodules over some special semirings and restate some previous results.
From now on, let $S$ be a yoked semiring such that every maximal ideal of $S$ is subtractive and let $M$ be a cancellative \linebreak$S$-semimodule.

\begin{theorem}(See Theorems \ref{1} and \ref{17})
	\label{7}
	The $S$-semimodule $M$ is a multiplication \linebreak$S$-semimodule if and only if for every maximal ideal $\mathfrak{p}$ of $S$ either $M$ is $\mathfrak{p}$-cyclic or $M=\{m\in M\mid m=qm\ for\ some\ q\in \mathfrak{p}\}$.
\end{theorem}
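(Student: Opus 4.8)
The plan is to read the statement as an ``if and only if'' in which each direction is handed to us by one of the earlier theorems, the only real content being a bridge between the two slightly different ``torsion-like'' conditions appearing in Theorems \ref{1} and \ref{17}.

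For the forward implication there is essentially nothing to prove beyond citing Theorem \ref{1}: if $M$ is a multiplication $S$-semimodule, that theorem says precisely that for every maximal ideal $\mathfrak{p}$ either $M$ is $\mathfrak{p}$-cyclic or $M=\{m\in M\mid m=qm\ for\ some\ q\in\mathfrak{p}\}$. I would note in passing that this direction uses none of the standing hypotheses on $S$ and $M$.

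The converse is where those hypotheses enter. The natural move is to invoke Theorem \ref{17}, whose hypothesis is phrased in terms of $T_{\mathfrak{p}}(M)$ rather than the set $L_{\mathfrak{p}}(M):=\{m\in M\mid m=qm\ for\ some\ q\in\mathfrak{p}\}$. So the step I would isolate is the equality $T_{\mathfrak{p}}(M)=L_{\mathfrak{p}}(M)$ for every maximal ideal $\mathfrak{p}$; once it is in hand, the hypothesis ``$M$ is $\mathfrak{p}$-cyclic or $M=L_{\mathfrak{p}}(M)$'' becomes exactly ``$M$ is $\mathfrak{p}$-cyclic or $T_{\mathfrak{p}}(M)=M$'', and Theorem \ref{17} gives the conclusion. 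The inclusion $T_{\mathfrak{p}}(M)\subseteq L_{\mathfrak{p}}(M)$ is immediate and needs no hypotheses: from $s+q=1$ with $q\in\mathfrak{p}$ and $sm=0$ one gets $m=(s+q)m=sm+qm=qm$.

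The reverse inclusion $L_{\mathfrak{p}}(M)\subseteq T_{\mathfrak{p}}(M)$ is the main obstacle, and it is exactly where ``yoked'', ``cancellative'', and ``subtractive maximal ideal'' are consumed. Given $m=qm$ with $q\in\mathfrak{p}$, the yoked property yields $t\in S$ with either $q+t=1$ or $1+t=q$; multiplying the relevant relation by $m$ and using $qm=m$ gives $m=m+tm$ in either case, so cancellativity of $M$ forces $tm=0$. In the first case the relation $t+q=1$ with $q\in\mathfrak{p}$ already exhibits $m\in T_{\mathfrak{p}}(M)$ with $s=t$. The delicate case is $1+t=q\in\mathfrak{p}$: here I would first argue that $t\notin\mathfrak{p}$ (were $t\in\mathfrak{p}$, subtractivity applied to $1+t\in\mathfrak{p}$ would force $1\in\mathfrak{p}$, which is impossible), then use maximality of $\mathfrak{p}$ to write $1=p+rt$ with $p\in\mathfrak{p}$ and $r\in S$ (the ideal $\mathfrak{p}+St$ properly contains $\mathfrak{p}$, hence equals $S$), and finally set $s:=rt$, so that $s+p=1$ and $sm=r(tm)=0$, placing $m$ in $T_{\mathfrak{p}}(M)$. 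I expect the subtractivity step to be the one requiring genuine care, since it is precisely what substitutes for the ring-theoretic passage from $m=qm$ to $(1-q)m=0$ that is unavailable over a semiring.
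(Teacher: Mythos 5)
Your proposal is correct, but it is organized differently from the paper's proof. The paper does not reduce the converse to Theorem \ref{17}; instead it reruns the proof of Theorem \ref{17} inline: for $x\in N$ with $K=\{s\in S\mid sx\in IM\}\subseteq\mathfrak{p}$, it shows the alternative $M=\{m\in M\mid m=qm\ \text{for some}\ q\in\mathfrak{p}\}$ leads directly to a contradiction (for that particular $x$), concludes $M$ must be $\mathfrak{p}$-cyclic, and then repeats the $\mathfrak{p}$-cyclic computation of Theorem \ref{17}. You instead isolate the bridge $T_{\mathfrak{p}}(M)=L_{\mathfrak{p}}(M)$ as a lemma and then cite Theorem \ref{17} as a black box. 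The core ingredients are identical in both treatments --- the yoked property to produce $t$ with $q+t=1$ or $1+t=q$, cancellativity of $M$ to extract $tm=0$, and subtractivity of $\mathfrak{p}$ to dispose of the case $1+t=q$ --- and your handling of each is sound, including the extra step (not needed in the paper's contradiction-based version) of using maximality of $\mathfrak{p}$ to convert $tm=0$ with $t\notin\mathfrak{p}$ into an element $s=rt$ satisfying $s+p=1$ and $sm=0$. What your route buys is modularity and a reusable statement (under the standing hypotheses the two torsion-type subsemimodules coincide at every maximal ideal); what the paper's route buys is that it never has to produce such an $s$, since it only needs a contradiction for a single element. Either way the forward direction is, as you say, just Theorem \ref{1}.
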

\begin{proof}
	$(\rightarrow)$: Follows by Theorem \ref{1}.
	
	$(\leftarrow)$: Let $N$ be a subsemimodule of $M$ and $I=(N:M)$. Then $IM\subseteq N$. Let $x\in N$ and put $K=\{s\in S\mid sx\in IM\}$. If $K\neq S$, there exists a maximal ideal $\mathfrak{p}$ of $S$ such that $K\subseteq \mathfrak{p}$. If $M=\{m\in M\mid m=qm\ for\ some\ q\in \mathfrak{p}\}$, then there exists $q\in \mathfrak{p}$ such that $x=qx$. Since $S$ is a yoked semiring, there exists $t\in S$ such that $t+1=q$ or $q+t=1$. Suppose that $q+t=1$. Then $qx+tx=x$ and hence $tx=0$. Therefore $t\in K\subseteq \mathfrak{p}$ which is a contradiction. Now suppose that $t+1=q$. Then $tx+x=qx$ and hence $tx=0$. Therefore $t\in K\subseteq \mathfrak{p}$. But $\mathfrak{p}$ is a subtractive ideal of $S$, so $1\in \mathfrak{p}$ which is a contradiction. Therefore $M$ is $\mathfrak{p}$-cyclic. Thus there exist $m\in M$, $t\in S$ and  $q\in \mathfrak{p}$ such that $t+q=1$ and  $tM\subseteq Sm$. Therefore
	$tN$ is a subsemimodule of $Sm$. Hence $tN=Jm$ where $J$ is the ideal $\{s\in S\mid  sm\in tN\}$ of $S$. Then $tJM=JtM\subseteq Jm\subseteq N$ and hence $tJ \subseteq I$. Thus $t^2x\in t^2N=tJm\subseteq IM$. Therefore $t^2\in K\subseteq \mathfrak{p}$ which is a contradiction.
\end{proof}
\begin{lemma}\label{23}
	If $\mathfrak{p}$ is a maximal ideal of $S$, then $N=\{m\in M\mid m=qm\ for\ some\ q\in \mathfrak{p}\}$ is a subsemimodule of $M$.
	\begin{proof}
		Let $m_1,m_2\in N$. Then there exist $q_1,q_2\in \mathfrak{p}$
		such that $m_1=q_1m_1$ and $m_2=q_2m_2$. Since $S$ is a yoked semiring, there exits an element $r$ such that $q_1+q_2+r=q_1q_2$ or $q_1q_2+r=q_1+q_2$. Since $\mathfrak{p}$ is a subtractive ideal, $r\in \mathfrak{p}$. 
		
		Assume that $q_1q_2+r=q_1+q_2$. Then  $q_1q_2(m_1\!+m_2)+r(m_1\!+m_2)\!=\!(q_1+q_2)(m_1\!+m_2)$. Thus $q_1q_2m_1+q_1q_2m_2+r(m_1+m_2)=q_1m_1+q_2m_1+q_1m_2+q_2m_2$. Hence
		$q_2m_1+q_1m_2+r(m_1+m_2)=q_1m_1+q_2m_1+q_1m_2+q_2m_2$. Since $M$ is a cancellative $S$-semimodule,
		$r(m_1+m_2)=q_1m_1+q_2m_2$. Thus $r(m_1+m_2)=m_1+m_2$. Therefore $m_1+m_2\in N$. 
		
		Now assume that $q_1+q_2+r=q_1q_2$. Then $(q_1+q_2+r)(m_1+m_2)=q_1q_2(m_1+m_2)$. Hence  $q_1m_1+q_1m_2+q_2m_1+q_2m_2+r(m_1+m_2)=q_1q_2m_1+q_1q_2m_2$.
		Thus 
		$q_1m_1+q_1m_2+q_2m_1+q_2m_2+r(m_1+m_2)=q_2m_1+q_1m_2$. Since $M$ is a cancellative $S$-semimodule,
		$q_1m_1+q_2m_2+r(m_1+m_2)=0$ and hence $m_1+m_2+r(m_1+m_2)=(1+r)(m_1+m_2)=0$. Since $\mathfrak{p}$ is a subtractive ideal, $(1+r)\notin \mathfrak{p}$. Therefore $(1+r)+\mathfrak{p}=S$ since $\mathfrak{p}$ is a maximal ideal of $S$. Thus there exist $t\in \mathfrak{p}$ and $s\in S$ such that $s(1+r)+t=1$. Hence $s(1+r)(m_1+m_2)+t(m_1+m_2)=m_1+m_2$. Therefore $t(m_1+m_2)=m_1+m_2$ and so $m_1+m_2\in N$.
		
		Let $s\in S$ and $m\in N$. Then there exists $q\in \mathfrak{p}$ such that $m=qm$. Thus $sm=sqm$. Since $sq\in \mathfrak{p}$, $sm\in N$.
		Therefore $N$ is a subsemimodule of $M$. 
	\end{proof} 
\end{lemma}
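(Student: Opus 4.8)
The plan is to verify the two closure conditions for $N$ separately, disposing of scalar multiplication first since it is immediate: if $m = qm$ with $q \in \mathfrak{p}$ and $s \in S$, then $sm = s(qm) = (sq)m$ with $sq \in \mathfrak{p}$, so $sm \in N$. The entire difficulty lies in closure under addition, so I would concentrate there.

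For addition I would be guided by the module-theoretic identity behind the whole argument. If subtraction were available, then $m_i = q_i m_i$ would read $(1 - q_i)m_i = 0$, and setting $q = q_1 + q_2 - q_1 q_2$ would give $1 - q = (1-q_1)(1-q_2)$, whence $(1-q)(m_1+m_2) = 0$ with $q \in \mathfrak{p}$. Over a semiring this $q$ is not at hand, so the idea is to replace the subtraction by a yokedness comparison between the two elements $q_1 + q_2$ and $q_1 q_2$ of $\mathfrak{p}$. Yokedness produces $r \in S$ with either $q_1 q_2 + r = q_1 + q_2$ or $q_1 + q_2 + r = q_1 q_2$, and since both $q_1 + q_2$ and $q_1 q_2$ lie in the subtractive ideal $\mathfrak{p}$, in either case $r \in \mathfrak{p}$. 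This is the point where the yoked and subtractive hypotheses first enter.

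Next I would split into the two cases, in each multiplying the defining relation by $m_1 + m_2$, expanding, and repeatedly using $q_1 m_1 = m_1$ and $q_2 m_2 = m_2$ (hence $q_1 q_2 m_1 = q_2 m_1$ and $q_1 q_2 m_2 = q_1 m_2$) to collapse terms. In the case $q_1 q_2 + r = q_1 + q_2$, cancelling the common summand $q_2 m_1 + q_1 m_2$, which is legitimate because $M$ is cancellative, leaves $r(m_1 + m_2) = m_1 + m_2$, exhibiting $m_1 + m_2 \in N$ directly with witness $r \in \mathfrak{p}$.

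The case $q_1 + q_2 + r = q_1 q_2$ is where I expect the real obstacle, because the analogous cancellation yields $(1 + r)(m_1 + m_2) = 0$ rather than a relation whose coefficient already lies in $\mathfrak{p}$; the coefficient $1 + r$ is, morally, a unit away from $\mathfrak{p}$ rather than an element of it. To finish I would invoke both standing hypotheses on $\mathfrak{p}$: subtractivity forces $1 + r \notin \mathfrak{p}$, since otherwise $r \in \mathfrak{p}$ would drag $1$ into $\mathfrak{p}$, and maximality then gives $(1+r) + \mathfrak{p} = S$, so $s(1+r) + t = 1$ for some $s \in S$ and $t \in \mathfrak{p}$. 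Applying this to $m_1 + m_2$ and using $(1+r)(m_1+m_2) = 0$ annihilates the $s(1+r)$ term and leaves $t(m_1 + m_2) = m_1 + m_2$ with $t \in \mathfrak{p}$, so again $m_1 + m_2 \in N$. This final maneuver is exactly what explains why the lemma requires both the subtractivity of maximal ideals and the yoked hypothesis, and once both cases are settled the proof is complete.
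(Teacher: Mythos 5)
Your proposal is correct and follows essentially the same route as the paper's proof: yokedness applied to $q_1+q_2$ and $q_1q_2$, subtractivity to place $r$ in $\mathfrak{p}$, the same two-case cancellation computation, and the same use of $1+r\notin\mathfrak{p}$ plus maximality to handle the case $(1+r)(m_1+m_2)=0$. No substantive differences.
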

Similar to \cite[Corollary 1.3]{E4}, we have the following theorem.
\begin{theorem}
	\label{8}
	Let $M=\sum_{\lambda\in \Lambda} Sm_{\lambda}$. Then $M$ is a multiplication semimodule if and only if there exist ideals $I_{\lambda}(\lambda\in \Lambda)$ of $S$ such that $Sm_{\lambda}=I_{\lambda}M$ for all $\lambda\in \Lambda$.
\end{theorem}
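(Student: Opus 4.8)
The plan is to prove the forward implication directly from the definition and the converse through the local criterion of Theorem \ref{7}. The forward direction needs no computation: if $M$ is a multiplication semimodule, then every subsemimodule of $M$ has the form $IM$, and applying this to the cyclic subsemimodules $Sm_\lambda$ produces ideals $I_\lambda$ with $Sm_\lambda=I_\lambda M$ (indeed one may take $I_\lambda=(Sm_\lambda:M)$). So the substance of the argument lies entirely in the converse.

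For the converse I would fix an arbitrary maximal ideal $\mathfrak{p}$ of $S$ and check that one of the two alternatives in Theorem \ref{7} holds, splitting into cases according to whether some $I_\lambda$ escapes $\mathfrak{p}$. If $I_{\lambda_0}\nsubseteq\mathfrak{p}$ for some $\lambda_0$, then maximality of $\mathfrak{p}$ forces $I_{\lambda_0}+\mathfrak{p}=S$, so there exist $t\in I_{\lambda_0}$ and $q\in\mathfrak{p}$ with $t+q=1$; since $tM\subseteq I_{\lambda_0}M=Sm_{\lambda_0}$, the semimodule $M$ is $\mathfrak{p}$-cyclic, and the first alternative is met.

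The remaining case, where $I_\lambda\subseteq\mathfrak{p}$ for every $\lambda$, carries the real content. Here each generator lies in $\mathfrak{p}M$, because $m_\lambda\in Sm_\lambda=I_\lambda M\subseteq\mathfrak{p}M$, whence $M=\sum_\lambda Sm_\lambda=\mathfrak{p}M$. I would then feed this equality back into the hypothesis, using associativity and commutativity of ideal products, to obtain
\[ Sm_\lambda=I_\lambda M=I_\lambda(\mathfrak{p}M)=\mathfrak{p}(I_\lambda M)=\mathfrak{p}(Sm_\lambda)=\mathfrak{p}m_\lambda. \]
Then $m_\lambda\in\mathfrak{p}m_\lambda$ yields some $q\in\mathfrak{p}$ with $m_\lambda=qm_\lambda$, so every generator lies in the set $N=\{m\in M\mid m=qm\ for\ some\ q\in\mathfrak{p}\}$. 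The closing step invokes Lemma \ref{23}: since $N$ is a subsemimodule of $M$ containing all the $m_\lambda$, it contains $M=\sum_\lambda Sm_\lambda$, and therefore $M=N$, which is the second alternative. Having verified one alternative for every maximal ideal, Theorem \ref{7} then gives that $M$ is a multiplication semimodule.

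I expect the delicate point to be exactly this last case. The temptation is to mimic the module proof in which $M=\mathfrak{p}M$ is handled by torsion/Nakayama-style reasoning; but subtraction is unavailable here, so the conclusion $M=N$ cannot be read off elementwise and must instead be assembled from the generator equalities $Sm_\lambda=I_\lambda M$ and the fact (Lemma \ref{23}) that $N$ is a subsemimodule. Once the chain of ideal-product identities above is justified, the rest is routine.
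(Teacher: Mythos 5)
Your proposal is correct and follows essentially the same route as the paper: the forward direction via $I_\lambda=(Sm_\lambda:M)$, and the converse by fixing a maximal ideal $\mathfrak{p}$, showing $M$ is $\mathfrak{p}$-cyclic when some $I_\lambda\nsubseteq\mathfrak{p}$, and otherwise deriving $Sm_\lambda=\mathfrak{p}m_\lambda$ and invoking Lemma \ref{23} together with Theorem \ref{7}. The only cosmetic difference is that in the first case the paper picks $t\in I_\mu\setminus\mathfrak{p}$ and writes $1=q+st$, whereas you take $t+q=1$ with $t\in I_{\lambda_0}$ directly; both are equivalent.
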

\begin{proof}
	$(\rightarrow)$: Obvious.
	
	$(\leftarrow)$: Assume that there exist ideals
	$I_{\lambda}(\lambda\in \Lambda)$ of $S$ such that $Sm_{\lambda}=I_{\lambda}M(\lambda\in \Lambda)$. Let $\mathfrak{p}$ be a maximal ideal of $S$ and $I_{\mu}\nsubseteq \mathfrak{p}$ for some $\mu\in \Lambda$. Then there exists $t\in I_{\mu}$ such that $t\notin \mathfrak{p}$. Thus $\mathfrak{p}+(t)=S$ and hence there exist $q\in \mathfrak{p}$ and $s\in S$ such that $1=q+st$. Then $tsM\subseteq I_{\mu}M=Sm_{\mu}$. Therefore $M$ is $\mathfrak{p}$-cyclic.
	Now suppose that $I_{\lambda}\subseteq \mathfrak{p}$ for all $\lambda\in \Lambda$. Then $Sm_{\lambda}\subseteq \mathfrak{p}M(\lambda\in \Lambda)$. This implies that $M=\mathfrak{p}M$. But for any $\lambda\in \Lambda$, $Sm_{\lambda}=I_{\lambda}M=I_{\lambda}\mathfrak{p}M=\mathfrak{p}m_{\lambda}$. Therefore 
	$m_{\lambda}\in \{m\in M\mid  m=qm\ for\ some\ q\in \mathfrak{p}\}$. Since by Lemma \ref{23}, $\{m\in M\mid m=qm\ for\ some\ q\in \mathfrak{p}\}$ is an $S$-semimodule, we conclude that $M=\{m\in M\mid m=qm\ for\ some\ q\in \mathfrak{p}\}$. By Theorem \ref{7}, $M$ is a multiplication semimodule.
\end{proof}
It follows from Theorem \ref{8} that if $S$ is a yoked semiring such that every maximal ideal of $S$ is subtractive, then any additively cancellative ideal $I$ generated by idempotents is a multiplication ideal.

The following is a generalization of \cite[Theorem 3.1]{E4}
\begin{theorem}
	Let $M$ be a faithful multiplication $S$-semimodule. 
	Then the following statements are equivalent:
	\begin{enumerate} 
		\item M is finitely generated.
		\item $M\neq \mathfrak{p}M$ for any maximal ideal $\mathfrak{p}$ of $S$.\label{10}
		\item If $I$ and $J$ are ideals of $S$ such that $IM\subseteq JM$ then $I\subseteq J$.
		\item For each subsemimodule $N$ of $M$ there exists a unique ideal $I$ of $S$ such that $N=IM$.
		\item $M\neq IM$ for any proper ideal $I$ of $S$.\label{11}
	\end{enumerate} 
\end{theorem}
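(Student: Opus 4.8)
The plan is to prove the cycle $(1)\Rightarrow(2)\Rightarrow(3)\Rightarrow(4)\Rightarrow(5)\Rightarrow(1)$. Throughout I would treat the existence assertion in $(4)$ as automatic, since $M$ is a multiplication semimodule; thus $(4)$ reduces to the uniqueness (cancellation) statement ``$IM=JM$ implies $I=J$''.

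For $(1)\Rightarrow(2)$ I would argue by contradiction. Write $M=\sum_{i=1}^{n}Sm_{i}$ and suppose $M=\mathfrak{p}M$ for some maximal ideal $\mathfrak{p}$. As in the proof of Theorem \ref{1}, the equality $M=\mathfrak{p}M$ forces each generator to satisfy $m_{i}=q_{i}m_{i}$ for some $q_{i}\in\mathfrak{p}$. Since $S$ is yoked there is $r_{i}$ with $q_{i}+r_{i}=1$ or $1+r_{i}=q_{i}$; in either case substituting into $m_{i}=q_{i}m_{i}$ and cancelling (as $M$ is cancellative) yields $r_{i}m_{i}=0$, while subtractivity of $\mathfrak{p}$ forces $r_{i}\notin\mathfrak{p}$. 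Setting $r=\prod_{i=1}^{n}r_{i}$ we get $rm_{i}=0$ for all $i$, hence $rM=0$; faithfulness then gives $r=0$, contradicting $r\notin\mathfrak{p}$ (since $\mathfrak{p}$ is prime and $0\in\mathfrak{p}$). So $M\neq\mathfrak{p}M$ for every maximal $\mathfrak{p}$.

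The step I expect to be the main obstacle is $(2)\Rightarrow(3)$, because the module-theoretic proof cancels a scalar off a nonzero element, which is unavailable here without the $MC$ hypothesis. I would replace that move by a faithfulness device adapted to the present hypotheses: if two scalars $s,s'$ act identically on all of $M$, then $s=s'$ (yoked gives, say, $s+u=s'$, cancellativity gives $uM=0$, and faithfulness gives $u=0$). Now, given $IM\subseteq JM$ and $a\in I$, put $K=\{s\in S\mid sa\in J\}$ and suppose $K\subseteq\mathfrak{p}$ for some maximal $\mathfrak{p}$. By $(2)$ we have $M\neq\mathfrak{p}M$, so Theorem \ref{1} makes $M$ $\mathfrak{p}$-cyclic: there are $m,t,q$ with $t+q=1$, $q\in\mathfrak{p}$ and $tM\subseteq Sm$. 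Writing $tx=s_{x}m$ for each $x\in M$, and noting $tam\in tJM=JtM\subseteq Jm$ so that $tam=bm$ with $b\in J$, a direct computation gives $t^{2}ax=tbx$ for every $x\in M$. The cancellation device then yields $t^{2}a=tb\in J$, whence $t^{2}\in K\subseteq\mathfrak{p}$; but $t\notin\mathfrak{p}$ (otherwise $1=t+q\in\mathfrak{p}$) and $\mathfrak{p}$ is prime, a contradiction. Hence $K=S$ and $a\in J$, so $I\subseteq J$.

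The remaining implications are routine. For $(3)\Rightarrow(4)$, if $IM=JM$ then $IM\subseteq JM$ and $JM\subseteq IM$, so $(3)$ gives $I\subseteq J$ and $J\subseteq I$, i.e. uniqueness. For $(4)\Rightarrow(5)$, if $M=IM$ with $I$ proper, then $SM=M=IM$, so uniqueness applied to $N=M$ forces $S=I$, a contradiction. For $(5)\Rightarrow(1)$, Lemma \ref{2} gives $M=\theta(M)M$; were $\theta(M)$ proper this would contradict $(5)$, so $\theta(M)=S$, and writing $1=r_{1}+\cdots+r_{n}$ with $r_{i}\in(Sm_{i}:M)$ exhibits $M=Sm_{1}+\cdots+Sm_{n}$ as finitely generated.
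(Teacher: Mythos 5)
Your proposal is correct and follows essentially the same route as the paper: the same cycle of implications, the same $\mathfrak{p}$-cyclicity argument via Theorem \ref{1} for $(2)\Rightarrow(3)$, and the same $\theta(M)=S$ argument for $(5)\Rightarrow(1)$. The only cosmetic difference is in $(2)\Rightarrow(3)$, where you package the yoked--cancellative--faithful manipulation as a general ``two scalars agreeing on all of $M$ are equal'' device and verify $t^{2}ax=tbx$ globally, whereas the paper works directly with the single element $m$ to get $cm=0$, $tc=0$ and hence $t^{2}a=tb$; the ingredients and conclusion are identical.
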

\begin{proof}
	$(1) \rightarrow (2)$:
	Let $\mathfrak{p}$ be a maximal ideal of $S$ such that $M=\mathfrak{p}M$ and $M=Sm_1+\ldots+Sm_n$. Since $M$ is a multiplication $S$-semimodule, for each $1\leq i\leq n$, there exists $K_i\subseteq S$ such that $Sm_i=K_iM=K_i\mathfrak{p}M=\mathfrak{p}K_iM=\mathfrak{p}m_i$. Therefore $m_i=p_im_i$ for some $p_i\in \mathfrak{p}$. Since $S$ is a yoked semiring, there exists $t_i\in S$ such that $t_i+p_i=1$ or $1+t_i=p_i$. Suppose that $t_i+p_i=1$. Then $t_im_i+p_im_i=m_i$. Since $M$ is a cancellative $S$-semimodule, $t_im_i=0$. Now suppose that $1+t_i=p_i$. Then $m_i+t_im_i=p_im_i$ and hence $t_im_i=0$. Put $t=t_1\ldots t_n$. Then for all $i$, $tm_i=0$. Thus $tM=\{0\}$. Since $M$ is a faithful $S$-semimodule, $t=0\in \mathfrak{p}$. Since $\mathfrak{p}$ is a prime ideal, $t_i\in \mathfrak{p}$ for some $1\leq i\leq n$. If $t_i+p_i=1$, then $1\in \mathfrak{p}$ which is a contradiction. If $1+t_i=p_i$, then, since $\mathfrak{p}$ is a subtractive ideal of $S$, $1\in \mathfrak{p}$ which is a contradiction. Therefore $M\neq \mathfrak{p}M$.
	
	$(2) \rightarrow (3)$: 
	Let $I$ and $J$ be ideals
	of $S$ such that $IM\subseteq JM$. Let $a\in I$ and put $K=\{r\in S\mid ra\in J\}$. If
	$K\neq S$, then there exists a maximal ideal $\mathfrak{p}$ of $S$ such that $K\subseteq \mathfrak{p}$. By \ref{10}, $M\neq \mathfrak{p}M$. Thus $M$ is $\mathfrak{p}$-cyclic and hence there exist $m\in M$, $t\in S$ and  $q\in \mathfrak{p}$ such that $t+q=1$ and $tM\subseteq Sm$. Then $tam\in tJM=JtM\subseteq Jm$. Thus there exists $b\in J$ such that $tam=bm$. Since $S$ is a yoked semiring, there exists $c\in S$ such that $ta+c=b$ or $b+c=ta$. Suppose that $ta+c=b$. Then $t^2a+tc=tb$ and $tam+cm=bm$. Since $M$ is cancellative, $cm=0$. But $tcM\subseteq c(Sm)=\{0\}$. Since $M$ is a faithful semimodule, $tc=0$. Hence $t^2a=tb\in J$. Therefore $t^2\in K\subseteq \mathfrak{p}$ which is a contradiction. Thus $S=K$ and $a\in J$. Now suppose that $b+c=ta$. Then $tb+tc=t^2a$ and $bm+cm=tam$. Since $M$ is cancellative, $cm=0$. A similar argument shows that $a\in J$.
	
	$(3) \rightarrow (4)\rightarrow (5)$: Obvious.
	
	$(5) \rightarrow (1)$:
	It is similar to the proof of Theorem \ref{3}(3).
	
\end{proof}
Theorem \ref{18} can be restated as follows:
\begin{theorem}(cf. \cite[Proposition 3]{E2})
	Suppose that $\mathfrak{p}$ is a prime ideal and let $M$ be a faithful multiplication $S$-semimodule. Let $a\in S$ and $x\in M$ such that $ax\in \mathfrak{p}M$. Then $a\in \mathfrak{p}$ or $x\in \mathfrak{p}M$.
\end{theorem}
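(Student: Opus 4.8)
The plan is to follow the scheme of Theorem \ref{18} and the ``backward'' half of Theorem \ref{7}, but to replace the multiplicatively cancellative hypothesis—used there to cancel a generator against a scalar—by the standing assumptions that $S$ is yoked with subtractive maximal ideals and that $M$ is cancellative and faithful. Assume $a \notin \mathfrak{p}$ and set $K = \{s \in S \mid sx \in \mathfrak{p}M\}$; this is an ideal, and it suffices to prove $K = S$, since then $1 \in K$ gives $x \in \mathfrak{p}M$. Suppose instead $K \neq S$ and fix a maximal ideal $\mathfrak{q}$ with $K \subseteq \mathfrak{q}$. By Theorem \ref{7}, applied to the cancellative multiplication semimodule $M$, either $M = \{m \in M \mid m = qm \text{ for some } q \in \mathfrak{q}\}$ or $M$ is $\mathfrak{q}$-cyclic; I would derive a contradiction in each case.

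In the first case $x = q''x$ for some $q'' \in \mathfrak{q}$, and since $S$ is yoked there is $c \in S$ with $q'' + c = 1$ or $1 + c = q''$. In either event, adding the displayed terms and using that $M$ is cancellative yields $cx = 0 \in \mathfrak{p}M$, so $c \in K \subseteq \mathfrak{q}$; the relation between $c$, $q''$ and $1$ together with the subtractivity of $\mathfrak{q}$ then forces $1 \in \mathfrak{q}$, a contradiction. This is exactly the argument already used in the proof of Theorem \ref{7}.

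The $\mathfrak{q}$-cyclic case is the heart of the matter. Here there are $m \in M$, $t \in S$ and $q' \in \mathfrak{q}$ with $t + q' = 1$ and $tM \subseteq Sm$ (and $m \neq 0$, else faithfulness forces $t = 0$ and $1 = q' \in \mathfrak{q}$). Writing $tx = sm$ and using $ax \in \mathfrak{p}M$ together with $t\mathfrak{p}M = \mathfrak{p}(tM) \subseteq \mathfrak{p}m$, I get $(as)m = t(ax) = p_1 m$ for some $p_1 \in \mathfrak{p}$. The obstacle is that, lacking the $MC$ property, I cannot cancel $m$ to obtain the scalar identity $as = p_1$. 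To circumvent this I would first promote $(as)m = p_1m$ to a relation valid on all of $M$: for any $z \in M$ write $tz = wm$ with $w \in S$ (possible since $tz \in tM \subseteq Sm$), and multiplying $(as)m = p_1m$ by $w$ and reorganizing gives $(ast)z = (p_1 t)z$. Since $M$ is faithful and cancellative and $S$ is yoked, two scalars acting identically on every element of $M$ must coincide (yoke them, cancel, and invoke faithfulness on the difference), so $ast = p_1 t$ in $S$.

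Now $ast = p_1 t \in \mathfrak{p}$, and since $\mathfrak{p}$ is prime with $a \notin \mathfrak{p}$ we conclude $st \in \mathfrak{p}$. Multiplying $tx = sm$ by $t$ gives $t^2 x = (st)m \in \mathfrak{p}M$, so $t^2 \in K \subseteq \mathfrak{q}$; as $\mathfrak{q}$ is prime this yields $t \in \mathfrak{q}$ and hence $1 = t + q' \in \mathfrak{q}$, the desired contradiction. Therefore $K = S$ and $x \in \mathfrak{p}M$. The one genuinely new ingredient compared with Theorem \ref{18} is the replacement of scalar cancellation against a single generator by the faithful-plus-cancellative ``acts identically implies equal'' step, which produces only $ast = p_1 t$; the passage to $t^2$ rather than $t$ is then forced, since this identity delivers $st \in \mathfrak{p}$ instead of $s \in \mathfrak{p}$.
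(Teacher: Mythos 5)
Your proof is correct and follows essentially the same route as the paper's: reduce to showing the ideal $K$ equals $S$, invoke the dichotomy of Theorem \ref{7} (the paper uses Theorem \ref{1}) at a maximal ideal $\mathfrak{q}\supseteq K$, dispose of the case $M=\{m\mid m=qm\}$ by the yoked--cancellative--subtractive argument, and in the $\mathfrak{q}$-cyclic case combine yokedness, cancellativity and faithfulness to convert $(as)m=p_1m$ into the scalar identity $ast=p_1t$. The only deviations are minor: the paper yokes $as$ against $p_1$ directly (getting $cm=0$, hence $tc=0$) where you use a ``scalars agreeing on all of $M$ coincide'' step, and your conclusion via $st\in\mathfrak{p}$, $t^2x\in\mathfrak{p}M$, $t^2\in K$ is in fact slightly more careful than the paper's direct jump from $ast=p_1t\in\mathfrak{p}$ to $s\in\mathfrak{p}$.
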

\begin{proof}
	Let $a\notin \mathfrak{p}$ and $K=\{s\in S\mid sx\in \mathfrak{p}M \}$. Assume that $K\neq S$.  Then there exists a maximal ideal $\mathfrak{q}$ of $S$ such that $K\subseteq \mathfrak{q}$. Let $M=\mathfrak{q}M$. Then there exists an ideal $I$ of $S$ such that $Sx=IM$. Thus $Sx=I\mathfrak{p}M=\mathfrak{p}IM=\mathfrak{p}x$. Hence $x=qx$ for some $q\in \mathfrak{p}$.
	Since $S$ is a yoked semiring, there exists $t\in S$ such that $t+1=q$ or $q+t=1$. If $q+t=1$, then $qx+tx=x$ and hence $tx=0$. Therefore $t\in K\subseteq \mathfrak{p}$ which is a contradiction. Now suppose that $t+1=q$. Then $tx+x=qx$ and hence $tx=0$. Thus $t\in K\subseteq \mathfrak{p}$. But $\mathfrak{p}$ is a subtractive ideal of $S$, so $1\in \mathfrak{p}$ which is a contradiction.
	Hence $M\neq \mathfrak{q}M$. Thus by Theorem \ref{1}, $M$ is $\mathfrak{q}$-cyclic. Therefore there exist $m\in M$, $t\in S$ and $q\in \mathfrak{q}$ such that $t+q=1$ and $tM\subseteq Sm$. Thus $tx=sm$ for some $s\in S$. Since $t\mathfrak{p}M\subseteq \mathfrak{p}m$, $tax\in t\mathfrak{p}M\subseteq \mathfrak{p}m$. Hence $tax=p_1m$ for some $p_1\in \mathfrak{p}$. Then $asm=p_1m$. Since $S$ is a yoked semiring, there exists $c\in S$ such that $as+c=p_1$ or $c+p_1=as$. Suppose that $as+c=p_1$. Then $asm+cm=p_1m$. Since $M$ is cancellative, $cm=0$. Then $tcM\subseteq c(Sm)=\{0\}$. Since $M$ is a faithful semimodule, $tc=0$. Hence $ast=p_1t\in \mathfrak{p}$ and so $s\in \mathfrak{p}$ since $\mathfrak{p}$ is a prime ideal. Then $tx=sm\in \mathfrak{p}M$ and hence $t\in K\subseteq \mathfrak{q}$ which is a contradiction. Thus $K=S$. Therefore $x\in \mathfrak{p}M$. Now suppose that $c+p_1=as$. A similar argument shows that $x\in \mathfrak{p}M$.
\end{proof}


\begin{thebibliography}{99}
\bibitem{A} D. D. Anderson, Y. Al-Shaniafi, Multiplication modules and the ideal $\theta(M)$,. Comm. Algebra \textbf{30(7)}, 3383--3390 (2002).
\bibitem{B1} A. Barnard, Multiplication modules. J. Algebra {\bf 71}, 174--178 (1981).
\bibitem{D} R. P. Deore, Characterizations of Semimodules. Southeast Asian Bull. Math. {\bf 36}, 187--196 (2012).
\bibitem{E1} R. Ebrahimi Atani, S. Ebrahimi Atani, On subsemimodules of semimodules. Bul. Acad. \c{S}tiin\c{t}e Repub. Mold. Mat. {\bf 63(2)}, 20--30 (2010).
\bibitem{E2} S. Ebrahimi Atani, R. Ebrahimi Atani, U. Tekir, A Zariski topology for semimodules.
Eur. J. Pure Appl. Math. {\bf 4(3)}, 251--265 (2011).
\bibitem{E3} S. Ebrahimi Atani, M. Shajari Kohan, A note on finitely generated multiplication
semimodules over commutative semirings. Int. J. Algebra {\bf 4(8)}, 389--396 (2010).
\bibitem{E4} Z. A. El-Bast, P. F. Smith, Multiplication modules. Comm. Algebra {\bf 16(4)}, 755--779 (1988).
\bibitem{F} F. Farzalipour, P. Ghiasvand, Generalization of semiprime subsemimodules. J. Bangladesh Acad. Sci. {\bf 40(2)}, 187--197 (2016). 
\bibitem{G1} S. Ghalandarzadeh, P. Nasehpour, R. Razavi, Invertible ideals and Gaussian semirings. Arch. Math. (Brno) {\bf 53}, 179--192 (2017).
\bibitem{G2} J. S. Golan, Semirings and Their Applications, Kluwer Academic Publishers, Dordrecht, (1999).
\bibitem{K} C. B. Kim, A note on the localization in Semirings. J. Sci. Inst. Kook Min Univ {\bf 3} (1985).
\bibitem{L1} S. LaGrassa, Semirings: Ideals and Polynomials. PhD Thesis, University of Iowa, (1995).
\bibitem{L2} T. Y. Lam, Lectures on Modules and Rings, Springer-Verlag, (1998).
\bibitem{L3} M. D. Larsen, McCarthy P. J., Multiplicative Theory of Ideals, Academic Press, New York, (1971).
\bibitem{N} D. G. Northcott, Lessons on Rings, Modules and Multiplicity, New York: 
Cambridge University Press, (1968).
\bibitem{S1} M. Shabir, Some characterizations and sheaf representations of regular and weakly regular monoids and semirings, PhD Thesis, Quaid-I-Azam University, Pakistan, (1995).
\bibitem{S2} P. F. Smith, Some remarks on multiplication modules. Arch. Math. (Basel) {\bf 50}, 223--235 (1988).
\bibitem{T2} H. A. Tavallaee, M. Zolfaghari, On semiprime subsemimodules and related results. J. Indones. Math. Soc. {\bf 19}, 49--59 (2013).
\bibitem{T1} A. Tuganbaev, Multiplication modules. J. Math. Sci. {\bf 123(2)}, 3839--3905 (2004).

\bibitem{Y} G. Yesilot, K. Orel, U. Tekir, On prime subsemimodules of semimodules. Int. J. Algebra {\bf 4}, 53--60 (2010).	
\end{thebibliography}
\end{document}